\numberwithin{equation}{section}
\theoremstyle{plain} %
\newtheorem{theorem}{\indent Theorem}[section] % 
\newtheorem{lemma}[theorem]{\indent Lemma}
\newtheorem{corollary}[theorem]{\indent Corollary}
\newtheorem{proposition}[theorem]{\indent Proposition}
\theoremstyle{definition} % 
\newtheorem{definition}[theorem]{\indent\rm Definition}
\newtheorem{remark}[theorem]{\indent\rm Remark}
\newtheorem{example}[theorem]{\indent\rm Example}
\newcommand{\RII}{\ensuremath{\rm I \! I}}
\newcommand{\trace}{\mathop{\mathrm{trace}}\nolimits}
\def\tr#1{\mathord{\mathopen{{\vphantom{#1}}^t}#1}} 
\def\C{{\mathbb{C}}}%   \C == \mathbb{C}
\def\R{{\mathbb{R}}}%   \R == \mathbb{R}
\def\H{{\mathbb{H}}}%   \H == \mathbb{H}
\def\Z{{\mathbb{Z}}}%   \Z == \mathbb{Z}
\def\N{{\mathbb{N}}}%   \N == \mathbb{N}
\def\D{{\mathbb{D}}}%  \D == \mathbb{D}
\def\Pi{{\mathbb{P}}}%  \Pi == \mathbb{P}
\def\Si{{\mathbb{S}}}%   \Si == \mathbb{S}
\title[The hyperbolic Gauss map]{Ramification estimates for the hyperbolic Gauss map}
\author[Yu Kawakami]{Yu Kawakami}
\dedicatory{Dedicated to Professor Junjiro Noguchi on his sixtieth birthday}
\subjclass[2000]{Primary 53A10; Secondary 30D35, 53A35, 53C42.}
\keywords{hyperbolic Gauss map, pseudo-algebraic CMC-$1$ surface, algebraic CMC-$1$ surface, totally ramified value number}
\thanks{Partly supported by Global COE program (Kyushu university) ``Education and Research Hub for Mathematics-for-Industry'', 
Osaka City University Advanced Mathematical Institute (OCAMI) and Graduate school of 
Mathematics, Nagoya university.}
\address{
Graduate School of Mathematics, \endgraf
Kyushu university, \endgraf
6-10-1,Hakozaki,Higashiku,
Fukuoka-city,
812-8581, Japan}
\email{kawakami@math.kyushu-u.ac.jp}
\begin{document}

\maketitle

\begin{abstract}
We give the best possible upper bound on the number of exceptional values and the totally ramified value number
of the hyperbolic Gauss map for pseudo-algebraic constant mean curvature one surfaces in the hyperbolic three-space 
and some partial results on the Osserman problem for algebraic case. 
Moreover, we study the value distribution of the hyperbolic Gauss map for complete constant mean curvature one 
faces in de Sitter three-space. 
\end{abstract}

\section*{Introduction}
There exists a representation formula for constant mean curvature one (CMC-$1$, for short) surfaces in 
the hyperbolic $3$-space ${\H}^{3}$ as an analogy of the 
Enneper-Weierstrass formula in minimal surface theory (\cite{Br}, \cite{UY1}). As a result, 
the hyperbolic Gauss map $G$ of CMC-$1$ surfaces in ${\H}^{3}$ have some properties 
similar to the Gauss map $g$ of minimal surfaces in Euclidean $3$-space ${\R}^{3}$. 
In fact, by means of the representation formula, the hyperbolic Gauss map of these surfaces can 
be defined as a holomorphic map to the Riemann sphere $\widehat{\C}:=\C\cup \{\infty\}$. 
These results enable us to use the complex function theory for studying CMC-$1$ surfaces in ${\H}^{3}$. 
In particular, by applying the Fujimoto theorem \cite{Fuj1}, Z.~Yu \cite{Yu} showed that the hyperbolic Gauss map 
of a non-flat complete CMC-$1$ surface can omit at most four values. Moreover, for non-flat complete CMC-$1$ surfaces with 
finite total curvature, Collin, Hauswirth and Rosenberg \cite{CHR2} proved that $G$ can omit at most three values 
by using the result of \cite{UY2}. 
The result corresponds to the Osserman result \cite{O1} for algebraic minimal surfaces (By an algebraic minimal surface, 
we mean a complete minimal surface with finite total curvature). 

On the other hand, the author, Kobayashi and Miyaoka \cite{KKM} refined the Osserman argument and gave an effective estimate for the number of 
exceptional values $D_{g}$ and the totally ramified value number ${\nu}_{g}$ of the Gauss map for pseudo-algebraic and algebraic 
minimal surfaces in ${\R}^{3}$ recently. It also provided new proofs of 
the Fujimoto and the Osserman theorems for these classes and revealed the geometric meaning behind it. 
We \cite{Ka2} also gave such an estimate for them in Euclidean $4$-space ${\R}^{4}$. Jin and Ru \cite{JR} gave the estimate for 
the totally ramified hyperplane number of the generalized Gauss map of algebraic minimal surfaces in Euclidean $n$-space ${\R}^{n}$. 

In this paper, we give the upper bound on the number of exceptional values $D_{G}$ 
and the totally ramified value number ${\nu}_{G}$ of the hyperbolic Gauss map. 
In Section $1$, we recall some fundamental properties and notations about CMC-$1$ surfaces in ${\H}^{3}$.  
In particular, using the notion of ``duality'', 
we define algebraic and pseudo-algebraic CMC-$1$ surfaces and give examples which play important role 
in the following sections (see Example \ref{Vosscousin}). 
In Section $2$, we give the upper bound on the totally ramified value number of 
the hyperbolic Gauss map for pseudo-algebraic and algebraic CMC-$1$ surfaces (Theorem \ref{Main1}). 
This estimate is effective in the sense that the upper bound which we obtain is described in terms of 
geometric invariants and sharp for some topological cases. 
Note that this estimate corresponds to the defect relation in the Nevanlinna theory (see \cite{Ko} and \cite{NO}). 
Moreover, as a corollary of Theorem \ref{Main1}, we give some partial results on the Osserman problem for algebraic CMC-$1$ 
surfaces, that is, give the best possible upper bound on $D_{G}$ for these surfaces. 
Furthermore, applying the classification of algebraic CMC-$1$ surfaces with dual total absolute curvature equal to $8\pi$ \cite{RUY1}, 
we show that the hyperbolic Gauss map of algebraic CMC-$1$ surfaces 
has a value distribution theoretical property which has no analogue in the theory of the Gauss map of minimal surfaces 
in ${\R}^{3}$ (Proposition \ref{main2}). 
In Section $3$, we study the value distribution of the hyperbolic Gauss map of CMC-$1$ faces in de Sitter $3$-space ${\Si}^{3}_{1}$. 
Fujimori \cite{Fu} defined spacelike CMC-$1$ surfaces in ${\Si}^{3}_{1}$ with certain kind of singularities as ``CMC-$1$ faces'' and 
investigated global behavior of CMC-$1$ faces. Moreover, Fujimori, Rossman, Umehara, Yamada and Yang \cite{FRUYY} gave 
the Osserman-type inequality for complete (in the sense of \cite{Fu}) CMC-$1$ faces. As an application of their result, 
we give the upper bound on the number of exceptional values and the totally ramified value number
of the hyperbolic Gauss map for this class (Proposition \ref{face1} and Corollary \ref{face2}). 

The author thanks Professors Ryoichi Kobayashi, Reiko Miyaoka and Junjiro Noguchi for supporting research activities and 
many helpful comments. The author also thanks Professors Shoichi Fujimori, Shin Nayatani, Masaaki Umehara and Kotaro Yamada 
for thier useful advice. 

%.  
%we prove that there exist no algbraic Bryant surface with ${\nu}_{G}=2.5$ (Proposition \ref{main2}). 
%Note that such surfaces exist as algebraic minimal surfaces in ${\R}^{3}$ (\cite{Ka1,MS}). 
%This result means that the value distribution of the hyperbolic Gauss map of algebraic Bryant surfaces are distinct from 
%In the Nevanlinna theory, the number can be interpreted as a geometric definition of defect. 
%To our regret, the above argument partially solves the Osserman problem for the hyperbolic Gauss map of 
%algebraic Bryant surface.
%By Theorem \ref{main1} and the existence of example with $D_{G}=2$ (for example, a catenoin cousin), 
%the problem is restated as follow:  
%\begin{problem}[The Osserman problem]
%Is the best possible upper bound for the number of the exceptional values of the hyperbolic Gauss map 
%of a non-flat algebraic Bryant surface ``$2$'' or ``3''?
%\end{problem}

\section{Preliminaries}
%On realization of H^{3}
Let $\R^{4}_{1}$ be the Lorentz-Minkowski $4$-space with the Lorentz metric 
\begin{equation}\label{Lmetric}
\langle (x_{0}, x_{1}, x_{2}, x_{3}), (y_{0}, y_{1}, y_{2}, y_{3}) \rangle = -x_{0}y_{0}+x_{1}y_{1}+x_{2}y_{2}+x_{3}y_{3}\;.
\end{equation}
Then the hyperbolic 3-space is 
\[
\H^{3}=\{(x_{0}, x_{1}, x_{2}, x_{3})\in \R^{4}_{1} \, | \, -(x_{0})^{2}+(x_{1})^{2}+(x_{2})^{2}+(x_{3})^{2}=-1, x_{0}>0 \}
\]
with the induced metric from $\R^{4}_{1}$, which is a simply connected Riemannian $3$-manifold with constant 
sectional curvature $-1$. We identify $\R^{4}_{1}$ with the set of $2\times 2$ Hermitian matrices 
Herm($2$)$=\{X^{\ast}=X\}$ $(X^{\ast}:=\tr{\overline{X}}\,)$ by
\begin{equation}\label{Hermite}
(x_{0}, x_{1}, x_{2}, x_{3}) \longleftrightarrow \left(
\begin{array}{cc}
x_{0}+x_{3} & x_{1}+ix_{2} \\
x_{1}-ix_{2} & x_{0}-x_{3}
\end{array}
\right)\, ,
\end{equation}
where $i=\sqrt{-1}$ . In this identification, $\H^{3}$ is represented as 
\begin{equation}\label{hyperbolicspace}
\H^{3}=\{aa^{\ast}\,|\, a\in SL(2,\C)\}
\end{equation}
with the metric 
\[
\langle X, Y \rangle = -\frac{1}{2}\trace{(X\widetilde{Y})}, \quad   \langle X, X \rangle =-\det(X)\, ,
\]
where $\widetilde{Y}$ is the cofactor matrix of $Y$. The complex Lie group $ PSL(2,\C):= SL(2,\C)/\{\pm \text{id} \}$ 
acts isometrically on 
$\H^{3}$ by 
\begin{equation}\label{action}
\H^{3} \ni X \longmapsto aXa^{\ast}\, , 
\end{equation}
where $a\in PSL(2,\C)$. 

% Bryant representation
Bryant \cite{Br} gave a Weierstrass-type representation formula for CMC-$1$ surfaces. 
\begin{theorem}[Bryant \cite{Br}, Umehara and Yamada \cite{UY1}]\label{W-rep}
Let $\widetilde{M}$ be a simply connected Riemann surface with a reference point $z_{0}\in \widetilde{M}$. 
Let $g$ be a meromorphic function and $\omega$ be a 
holomorphic $1$-form on $\widetilde{M}$ such that 
\begin{equation}\label{metric1}
ds^{2}=(1+|g|^{2})^{2}|\omega|^{2}
\end{equation}
is a Riemannian metric on $\widetilde{M}$. 
Take a holomorphic immersion $F=(F_{ij})\colon \widetilde{M}\to SL(2,\C)$ satisfying $F(z_{0})=\text{id}$ and 
\begin{equation}\label{ODE1}
F^{-1}dF=\left(
\begin{array}{cc}
 g & -g^{2} \\
 1 & -g
\end{array}
\right)\omega\, .
\end{equation}
Then $f\colon \widetilde{M}\to \H^{3}$ defined by 
\begin{equation}\label{imm.}
f=FF^{\ast}
\end{equation}
is a CMC-$1$ surface and the induced metric of $f$ is $ds^{2}$. 
Moreover, the second fundamental form $h$ and the Hopf differential $Q$ 
of $f$ are given as follows: 
\begin{equation}\label{Value}
h=-Q-\overline{Q}+ds^{2}, \quad  Q={\omega}dg\, .
\end{equation}

Conversely, for any CMC-$1$ surface $f\colon \widetilde{M}\to \H^{3}$, there exist a meromorphic function $g$ and 
a holomorphic $1$-form $\omega$ on $\widetilde{M}$ such that the induced metric of $f$ is given by $(\ref{metric1})$ 
and $(\ref{imm.})$ holds, where the map $F\colon \widetilde{M}\to SL(2,\C)$ is a holomorphic null $($``null'' means 
$\det{(F^{-1}dF)=0})$ immersion satisfying $(\ref{ODE1})$.
\end{theorem}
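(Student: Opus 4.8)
The plan is to prove the two directions separately: the construction (``forward'') direction is essentially a verification, while the converse carries the real content. For the forward direction, I would first confirm that the prescribed Maurer--Cartan form $\Phi\omega$, with $\Phi=\begin{pmatrix} g & -g^{2} \\ 1 & -g\end{pmatrix}$, is integrable, so that a solution $F$ of (\ref{ODE1}) with $F(z_{0})=\mathrm{id}$ exists on the simply connected $\widetilde M$. The integrability condition $d(\Phi\omega)+\Phi\omega\wedge\Phi\omega=0$ holds automatically: $g$ is meromorphic and $\omega$ holomorphic, so $\Phi\omega$ is a $(1,0)$-form with holomorphic coefficients and every relevant wedge of such forms vanishes. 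Two algebraic facts then pin down the target: $\trace\Phi=0$ together with Jacobi's formula $d(\det F)=\det F\cdot\trace(F^{-1}dF)$ gives $\det F\equiv 1$, so $F$ maps into $SL(2,\C)$; and $\det\Phi=-g^{2}+g^{2}=0$ gives the nullity $\det(F^{-1}dF)=0$.

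With $F$ in hand I would compute the induced metric of $f=FF^{*}$ directly. Writing $df=F(\omega\Phi+\overline\omega\,\Phi^{*})F^{*}$ and using $\langle X,X\rangle=-\det X$ with $\det F=\det F^{*}=1$, the metric collapses to $-\det(\omega\Phi+\overline\omega\,\Phi^{*})$, a single $2\times2$ determinant whose cross terms cancel to leave $(1+|g|^{2})^{2}|\omega|^{2}$, as claimed in (\ref{metric1}). The formulas (\ref{Value}) follow in the same spirit: expressing the unit normal of $f$ as $FNF^{*}$ for the appropriate trace-adjusted matrix $N$ built from $g$, the $(2,0)$-part of the complexified second fundamental form, obtained by pairing second derivatives of $f$ against $n$, reduces by the identical matrix algebra to $Q=\omega\,dg$, and the constancy $H\equiv 1$ fixes the trace part of $h$ to be $ds^{2}$, giving $h=-Q-\overline Q+ds^{2}$.

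For the converse, the starting point is the coset description $\H^{3}=SL(2,\C)/SU(2)$ read off from (\ref{hyperbolicspace}): since $aa^{*}=bb^{*}$ iff $b^{-1}a\in SU(2)$, a CMC-$1$ immersion $f$ admits a smooth solution $F\colon\widetilde M\to SL(2,\C)$ of $f=FF^{*}$, unique up to right multiplication $F\mapsto Fu$ by $u\colon\widetilde M\to SU(2)$. Setting $\alpha=F^{-1}dF$ (trace-free, since $\det F\equiv1$) and splitting it into its Hermitian and anti-Hermitian parts, one checks $df=F(\alpha+\alpha^{*})F^{*}$, so the Hermitian part encodes the first fundamental form while the anti-Hermitian part behaves as a connection under the gauge $\alpha\mapsto u^{-1}\alpha u+u^{-1}du$. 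The aim is to spend this $SU(2)$-freedom to make $F$ holomorphic, i.e. to kill the $(0,1)$-part of $\alpha$.

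The main obstacle is precisely this gauge fixing, and it is where the hypothesis $H\equiv1$ must be used in full. For a general immersion no unitary gauge makes $F$ holomorphic: the obstruction is measured by the $(0,1)$-part of $\alpha$, equivalently by the incompatibility of the connection part with the conformal structure, and unwinding the structure equations of $f$ in $\H^{3}$ should show that this obstruction is controlled by the mean curvature. I expect that $H\equiv1$ is exactly the condition that trivializes it, reducing the search for $u$ to a solvable $\overline\partial$-type equation, with simple-connectivity of $\widetilde M$ ruling out any global monodromy obstruction to a single-valued $u$. Once $F$ is holomorphic, working in a conformal coordinate for $ds^{2}$ gives $\langle f_{z},f_{z}\rangle=0$, i.e. $\det\alpha=0$, so $\alpha$ is a trace-free null $(1,0)$-form and is therefore automatically of the shape $\Phi\omega$, with $\omega$ recovered as the lower-left entry of $\alpha$ and $g=\alpha_{11}/\alpha_{21}$. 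The forward computation then certifies that this $g,\omega$ reproduce (\ref{metric1}) and that $f=FF^{*}$ is (\ref{imm.}), closing the equivalence. I anticipate the genuinely delicate points to be the global solvability of the unitary gauge and the careful bookkeeping of $(1,0)$/$(0,1)$ types in the splitting of $\alpha$.
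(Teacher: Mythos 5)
A preliminary remark: the paper does not prove Theorem \ref{W-rep} at all --- it is quoted as background from Bryant \cite{Br} and Umehara--Yamada \cite{UY1} --- so your attempt can only be measured against the standard proofs in those sources. Your forward direction is essentially correct and complete in outline: integrability of $F^{-1}dF=\Phi\omega$ is automatic on a Riemann surface (wedges of $(1,0)$-forms vanish and holomorphic $1$-forms are closed), $\trace\Phi=0$ plus Jacobi's formula gives $\det F\equiv1$, $\det\Phi=0$ gives nullity, and $-\det\bigl(\Phi\omega+(\Phi\omega)^{*}\bigr)$ does collapse to $(1+|g|^{2})^{2}|\omega|^{2}$. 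One logical slip: in this direction $H\equiv1$ is a \emph{conclusion}, so you may not invoke ``the constancy $H\equiv1$'' to fix the trace part of $h$; you must compute the $(1,1)$-part of $h$ from the frame and verify that it equals $ds^{2}$ --- that computation is precisely what proves $H\equiv1$.

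The genuine gap is in the converse, and you have pointed at it yourself: the assertion that $H\equiv1$ is exactly the condition allowing the $SU(2)$ gauge freedom in a lift $f=FF^{*}$ to be spent on making $F$ holomorphic is something you ``expect,'' not something you establish. But that assertion \emph{is} the theorem; the surrounding steps (existence of a global lift over the simply connected $\widetilde{M}$, the gauge law $\alpha\mapsto u^{-1}\alpha u+u^{-1}du$, the factorization of a trace-free null $(1,0)$-form $\alpha$ as $\Phi\omega$ with $\omega=\alpha_{21}$ and $g=\alpha_{11}/\alpha_{21}$) are routine. To close the argument one must write the structure equations of $f$ in a frame adapted to the unit normal, express the $(0,1)$-components of $\alpha$ in terms of $H$ and the fundamental forms, and show they can be gauged to zero precisely when $H\equiv1$ (the off-diagonal $(0,1)$ entries vanish identically when $H=1$, and the residual diagonal $(0,1)$ entry is a flat abelian connection form, removable on the simply connected $\widetilde{M}$). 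Without this computation the converse is a plan, not a proof. A minor loose end: the factorization $\alpha=\Phi\omega$ requires $\alpha_{21}\not\equiv0$; the degenerate case $\alpha_{21}\equiv0$ (horospheres) should be disposed of separately or normalized away by a rigid motion.
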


\begin{remark}
Following the terminology of \cite{UY1}, $g$ is called a {\it secondary} Gauss map of $f$. 
The pair $(g, \omega)$ is called {\it Weierstrass data} of $f$, and $F$ is called a {\it holomorphic null lift} of $f$.
\end{remark}

% Def. of hyperbolic Gauss map
Let $f\colon M\to {\H}^{3}$ be a CMC-$1$ surface of a (not necessarily simply connected) Riemann surface $M$. 
Then the holomorphic null lift $F$ is defined only on the universal cover $\widetilde{M}$ of $M$. Thus,
the Weierstrass data $(g,\omega)$ is not single-valued on $M$. However, the Hopf differential $Q$ of $f$ is 
well-defined on $M$. By (\ref{ODE1}), the secondary Gauss map $g$ satisfies 
\begin{equation}\label{2nd}
g=-\frac{dF_{12}}{dF_{11}}=-\frac{dF_{22}}{dF_{21}}, \quad \text{where}\;  F(z)=\left(
\begin{array}{cc}
F_{11}(z) & F_{12}(z) \\
F_{21}(z) & F_{22}(z)
\end{array}
\right)\, .
\end{equation}
The {\it hyperbolic Gauss map} $G$ of $f$ is defined by, 
\begin{equation}\label{DefofG}
G=\frac{dF_{11}}{dF_{21}}=\frac{dF_{12}}{dF_{22}}\, .
\end{equation}
By identifying the ideal boundary ${\Si}^{2}_{\infty}$ of ${\H}^{3}$ with the Riemann sphere 
$\widehat{\C}:=\C\cup \{\infty\}$, the geometric meaning of $G$ is given as follows (cf.~\cite{Br}): 
The hyperbolic Gauss map $G$ sends each $p\in M$ to the point $G(p)$ at 
${\Si}^{2}_{\infty}$ reached by the oriented normal geodesics of $\H^{3}$ 
that starts at $f(p)$. In particular, $G$ is a meromorphic function on $M$. 

% Def. of dual surface 
The inverse matrix $F^{-1}$ is also a holomorphic null immersion, and produce a new CMC-$1$ surface 
$f^{\sharp}=F^{-1}(F^{-1})^{\ast}\colon \widetilde{M}\to \H^{3}$, called the {\it dual} of $f$ \cite{UY2}. 
By definition, the Weierstrass data $(g^{\sharp}, {\omega}^{\sharp})$ of $f^{\sharp}$ satisfies 
\begin{equation}
(F^{\sharp})^{-1}dF^{\sharp}=\left(
\begin{array}{cc}
g^{\sharp} & -( g^{\sharp} )^{2} \\
1 & -g^{\sharp}
\end{array}
\right){\omega}^{\sharp}\, . 
\end{equation}
Umehara and Yamada \cite[Proposition~4]{UY2} proved that the Weierstrass data, the Hopf differential $Q^{\sharp}$, and the hyperbolic Gauss map $G^{\sharp}$ of $f^{\sharp}$ 
are given by 
\begin{equation}\label{duality}
g^{\sharp}=G, \quad  {\omega}^{\sharp}=-\frac{Q}{dG},\quad   Q^{\sharp}=-Q,\quad  G^{\sharp}=g \,.
\end{equation}
So this duality between $f$ and $f^{\sharp}$ interchanges the roles of the hyperbolic Gauss map and 
secondary Gauss map.  We call the pair $(G, {\omega}^{\sharp})$ the {\it dual Weierstrass data} of $f$. 
Moreover, these invariants are related by 
\begin{equation}\label{Schwarz}
S(g)-S(G)=2Q\, ,
\end{equation}
where $S(\cdot)$ denotes the Schwarzian derivative
\[
S(h)=\biggl[\biggl(\frac{h''}{h'}\biggr)'-\frac{1}{2}\biggl(\frac{h''}{h'}\biggr)^{2}\,\biggr] dz^{2}, \; \biggl(\, '=\frac{d}{dz}\biggr)
\]
with respect to a complex local coordinate $z$ on $M$. 
By Theorem \ref{W-rep} and (\ref{duality}), the induced metric $ds^{2\sharp}$ of $f^{\sharp}$ is given by 
\begin{equation}\label{dualmetric}
ds^{2\sharp}=(1+|g^{\sharp}|^{2})^{2}|{\omega}^{\sharp}|^{2}=(1+|G|^{2})^{2}\biggl{|}\frac{Q}{dG}\biggr{|}^{2}\, .
\end{equation}
We call the metric $ds^{2\sharp}$ the {\it dual metric} of $f$. There exists the following linkage between 
the dual metric $ds^{2\sharp}$ and the metric $ds^{2}$. 
\begin{lemma}[Umehara-Yamada \cite{UY2}, Z.~Yu \cite{Yu}]\label{complete}
The Riemannian metric $ds^{2\sharp}$ is complete $($resp. nondegenerate$)$ if and only if $ds^{2}$ 
is complete $($resp. nondegenerate$)$.
\end{lemma}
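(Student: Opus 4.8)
The plan is to realize both metrics as the \emph{same} holomorphic object divided by two different pullback metrics, and then to control the single conformal factor relating them. Writing $\sigma=|dw|^{2}/(1+|w|^{2})^{2}$ for the Fubini--Study metric on $\widehat{\C}$, I would use $Q=\omega\,dg$ and $Q^{\sharp}=\omega^{\sharp}\,dG$ together with $|Q^{\sharp}|=|Q|$ from (\ref{duality}) to rewrite (\ref{metric1}) and (\ref{dualmetric}) as
\begin{equation*}
ds^{2}=\frac{|Q|^{2}}{g^{*}\sigma},\qquad ds^{2\sharp}=\frac{|Q|^{2}}{G^{*}\sigma},
\end{equation*}
so that $ds^{2\sharp}=\rho\,ds^{2}$ with $\rho:=g^{*}\sigma/G^{*}\sigma$. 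Since the dual operation is an involution on the Weierstrass data (from (\ref{duality}) one checks $(ds^{2\sharp})^{\sharp}=ds^{2}$), it suffices to prove that completeness, respectively nondegeneracy, of $ds^{2}$ implies the same for $ds^{2\sharp}$, and the whole question is reduced to understanding $\rho$.

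First I would pin down $\rho$ pointwise. From (\ref{ODE1}) and (\ref{DefofG}) the hyperbolic Gauss map is the image of the secondary Gauss map under the M\"obius transformation given by the null lift, $G=F[g]=(F_{11}g+F_{12})/(F_{21}g+F_{22})$. Applying the transformation law of $\sigma$ under $SL(2,\C)$ to this yields
\begin{equation*}
\rho=\left(\frac{\|F\,\tr{(g,1)}\|^{2}}{1+|g|^{2}}\right)^{2},
\end{equation*}
where $\|\cdot\|$ is the Hermitian (Frobenius) norm. Because $F$ takes values in $SL(2,\C)$ it is invertible at every point, so $\rho$ is everywhere finite and strictly positive; hence $ds^{2\sharp}=\rho\,ds^{2}$ degenerates exactly where $ds^{2}$ does, which settles the nondegeneracy statement outright.

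The main obstacle is completeness: $\rho$ need not be bounded on all of $M$, so the pointwise comparison alone cannot control lengths of divergent paths. To get around this I would pass to the immersions themselves, working on $\widetilde{M}$ (where both $f$ and $f^{\sharp}$ are defined, completeness being insensitive to passing to the universal cover). Since $ds^{2}$ and $ds^{2\sharp}$ are the induced metrics of $f=FF^{*}$ and $f^{\sharp}=F^{-1}(F^{-1})^{*}$, the $ds$- and $ds^{\sharp}$-lengths of a path equal the $\H^{3}$-lengths of its images under $f$ and $f^{\sharp}$. The decisive point is that $\det F=1$ makes $F^{-1}$ the adjugate of $F$, so $F$ and $F^{-1}$ share their entries up to sign; consequently $\trace(FF^{*})=\|F\|^{2}=\|F^{-1}\|^{2}=\trace\bigl(F^{-1}(F^{-1})^{*}\bigr)$, and since $\cosh(\mathrm{dist}_{\H^{3}}(o,X))=\tfrac12\trace X$ for $o=\mathrm{id}$ (by $\langle X,o\rangle=-\tfrac12\trace X$), the distances from $o$ to $f(p)$ and to $f^{\sharp}(p)$ coincide. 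Moreover, wherever $\|F\|\le C$, the estimates $\|F\,\tr{(g,1)}\|\le\|F\|\,(1+|g|^{2})^{1/2}$ and $\|F\,\tr{(g,1)}\|\ge\|F^{-1}\|^{-1}(1+|g|^{2})^{1/2}$, combined with $\|F^{-1}\|=\|F\|$, give the two-sided bound $C^{-4}\le\rho\le C^{4}$ on that region.

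With these ingredients the completeness statement follows by contradiction. Assuming $ds^{2}$ complete, let $\gamma$ be a divergent path; if $\int_{\gamma}ds^{\sharp}<\infty$, then $f^{\sharp}\circ\gamma$ has finite $\H^{3}$-length and so lies in a fixed ball $B(o,R)$, whence by the distance identity $f\circ\gamma$ lies in $B(o,R)$ as well, giving a uniform bound $\|F\|\le C_{R}$ along $\gamma$. The resulting bound $\rho\ge C_{R}^{-4}$ forces $ds\le C_{R}^{2}\,ds^{\sharp}$ along $\gamma$, so $\int_{\gamma}ds\le C_{R}^{2}\int_{\gamma}ds^{\sharp}<\infty$, contradicting completeness of $ds^{2}$; thus $ds^{2\sharp}$ is complete, and the involution supplies the converse. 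I expect the crux to be precisely the distance identity $\|F\|=\|F^{-1}\|$, special to $SL(2,\C)$, which compensates for the absence of a global bound on $\rho$ and turns the local comparison into a genuine length comparison along any end that stays bounded in $\H^{3}$.
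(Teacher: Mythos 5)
The paper states this lemma without proof, citing \cite{UY2} and \cite{Yu}, so there is no internal proof to compare against; your argument is correct and is essentially the one in those references. The only step you assert rather than verify is the ``transformation law of $\sigma$'' applied to $G=F[g]$ with a \emph{non-constant} $F$: this does need a check, but it goes through because $d(F_{11}g+F_{12})=F_{11}\,dg$ and $d(F_{21}g+F_{22})=F_{21}\,dg$ (the $dF$ terms cancel against the $\omega$-terms from (\ref{ODE1})), whence $dG=dg/(F_{21}g+F_{22})^{2}$ and your formula for $\rho$ follows; everything else --- the bound $\|F\|^{-4}\le\rho\le\|F\|^{4}$ via $\|F\|=\|F^{-1}\|$ for $SL(2,\C)$, and trapping a finite-length divergent path in a compact subset of $\H^{3}$ --- is exactly the standard mechanism.
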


Since $G$ and $Q$ are single-valued on $M$, the dual metric $ds^{2\sharp}$ is also single-valued on $M$. 
So we can define the {\it dual total absolute curvature} 
\[
\text{TA}(f^{\sharp}):=\displaystyle\int_{M} (-K^{\sharp})dA^{\sharp}=\displaystyle\int_{M} \frac{4|dG|^{2}}{(1+|G|^{2})^{2}}\, ,
\]
where $K^{\sharp}$$(\leq 0)$ and $dA^{\sharp}$ are the Gaussian curvature and the area element of $ds^{2\sharp}$ respectively. 
Note that TA($f^{\sharp}$) is the area of $M$ with respect to the (singular) metric induced from the Fubini-Study metric 
on the complex projective line ${\Pi}^{1}(\C)(=\widehat{\C})$ by $G$. 
When the dual total absolute curvature of a complete CMC-$1$ surface is finite, the surface is called 
{\em an algebraic CMC-$1$ surface}. 
\begin{theorem}[Bryant, Huber, Z.~Yu]\label{Huber} An algebraic CMC-$1$ surface $f\colon M \to \H^{3}$ 
satisfies: 
\begin{enumerate}
\item[(i)] $M$ is biholomorphic to $\overline{M}_{\gamma}\setminus \{p_{1},\ldots,p_{k}\}$, where 
$\overline{M}_{\gamma}$ is a closed Riemann surface of genus $\gamma$ and $p_{j}\in\overline{M}_{\gamma}$ $(j=1,\ldots,k)$. $($\cite{Hu}$)$
\item[(ii)] The dual Weierstrass data $(G, {\omega}^{\sharp})$ can be extended meromorphically to $\overline{M}_{\gamma}$\,. $($\cite{Br}$, $\cite{Yu}$)$
\end{enumerate}
\end{theorem}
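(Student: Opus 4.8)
The plan is to treat the two assertions separately: assertion (i) will follow from Huber's theorem applied to the \emph{dual} metric, and assertion (ii) from a local analysis of the dual Weierstrass data at each puncture, in the spirit of Osserman's argument for complete minimal surfaces of finite total curvature.

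For (i), I would first note that since $f$ is complete its induced metric $ds^{2}$ is complete, so by Lemma~\ref{complete} the dual metric $ds^{2\sharp}$ is complete as well. Because $K^{\sharp}\le 0$, the total absolute curvature of $ds^{2\sharp}$ coincides with the dual total absolute curvature,
\[
\int_{M}|K^{\sharp}|\,dA^{\sharp}=\int_{M}(-K^{\sharp})\,dA^{\sharp}=\mathrm{TA}(f^{\sharp})<\infty .
\]
Thus $(M,ds^{2\sharp})$ is a complete open Riemann surface of finite total absolute curvature, and Huber's theorem \cite{Hu} gives that $M$ is conformally equivalent to a closed Riemann surface $\overline{M}_{\gamma}$ with finitely many points $p_{1},\dots,p_{k}$ removed, which is exactly (i).

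For (ii), I would work near a single puncture, taking a coordinate $z$ with $p_{j}$ at $z=0$ and the punctured disc $\Delta^{*}=\{0<|z|<1\}$. The first step is to extend $G$. On $\Delta^{*}$ the integral $\int_{\Delta^{*}}4|dG|^{2}/(1+|G|^{2})^{2}$ is bounded by $\mathrm{TA}(f^{\sharp})$, so $G$ has finite spherical area counted with multiplicity. If $G$ had an essential singularity at $0$, the big Picard theorem would force $G$ to assume all but at most two values of $\widehat{\C}$ infinitely often on $\Delta^{*}$, making this spherical area infinite; hence the singularity is removable in the meromorphic sense, so $G$ extends meromorphically across $p_{j}$ and therefore to all of $\overline{M}_{\gamma}$.

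The second step, the extension of $\omega^{\sharp}$, is where I expect the main difficulty. With $G$ now meromorphic at $0$, the dual metric reads $ds^{2\sharp}=(1+|G|^{2})^{2}|\omega^{\sharp}|^{2}$ and is complete at $0$ with finite total curvature there. Completeness forbids $\omega^{\sharp}$ from decaying faster than a fixed power of $|z|$, while the finite-curvature condition together with the meromorphy of $G$ forbids an essential singularity of $\omega^{\sharp}$; quantitatively one shows, exactly as in Osserman's lemma on the local structure of complete metrics of finite total curvature, that $|\omega^{\sharp}|$ is comparable to $|z|^{\mu}$ near $0$ for some $\mu\in\Z$ with the metric diverging, which forces $\omega^{\sharp}$ to be meromorphic at $p_{j}$. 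Equivalently, since $\omega^{\sharp}=-Q/dG$ by $(\ref{duality})$ and $dG$ is meromorphic, this shows that the holomorphic Hopf differential $Q$ extends meromorphically across each puncture. Carrying out this local estimate carefully, rather than the soft Picard argument for $G$, is the crux of the proof and is the content of the results of Bryant \cite{Br} and Z.~Yu \cite{Yu} cited in the statement.
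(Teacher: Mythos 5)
The paper offers no proof of this theorem: it is stated as a known result, with (i) attributed to Huber and (ii) to Bryant and Z.~Yu, so there is no in-paper argument to compare against. Your sketch is the standard proof from those references and is essentially correct. For (i), completeness of $ds^{2\sharp}$ (via Lemma~\ref{complete}) together with $K^{\sharp}\leq 0$ and $\mathrm{TA}(f^{\sharp})<\infty$ places you exactly in the hypotheses of Huber's theorem, and for the first half of (ii) the finite spherical area of $G$ combined with the big Picard theorem correctly rules out an essential singularity of $G$ at each puncture. One point in your last step is misattributed, though: finiteness of $\mathrm{TA}(f^{\sharp})=\int 4|dG|^{2}/(1+|G|^{2})^{2}$ constrains only $G$ and says nothing at all about $\omega^{\sharp}$, so ``the finite-curvature condition \dots forbids an essential singularity of $\omega^{\sharp}$'' is not the right mechanism. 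What forbids it is completeness alone: once $G$ is meromorphic at $p_{j}$, the zeros of $\omega^{\sharp}$ (which coincide with poles of $G$) are finite in number near $p_{j}$, and after normalizing so that $G(p_{j})\neq\infty$ (replacing $G$ by $1/G$ and $\omega^{\sharp}$ by $G^{2}\omega^{\sharp}$ if necessary) the factor $(1+|G|^{2})^{2}$ is bounded above and below, so completeness of $ds^{2\sharp}$ at $p_{j}$ is equivalent to completeness of $|\omega^{\sharp}|^{2}$ there; Osserman's lemma then says that a holomorphic, nonvanishing $h$ on a punctured disc with an essential singularity at the puncture always admits a divergent path along which $\int |h|\,|dz|<\infty$, forcing $\omega^{\sharp}$ to have at most a pole. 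With that correction your outline is the intended argument, and the meromorphic extension of $Q=-\omega^{\sharp}dG$ follows as you say.
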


We call the points $p_{j}$ the {\it ends} of $f$. An end $p_{j}$ of $f$ is called {\it regular} 
if the hyperbolic Gauss map $G$ has at most a pole at $p_{j}$ \cite{UY1}. By Theorem \ref{Huber}, each end of an 
algebraic CMC-$1$ surface is regular. We define the class of ``pseudo-algebraic'' in complete CMC-$1$ surfaces as follows: 

\begin{definition}
A complete CMC-$1$ surface is said to be {\it pseudo-algebraic}, if the following conditions are satisfied: 
\begin{enumerate}
\item[(i)]  The dual Weierstrass data $(G, {\omega}^{\sharp})$ is defined on a punctured Riemann surface 
$M=\overline{M}_{\gamma}\setminus \{p_{1},\ldots,p_{k}\}$, where 
$\overline{M}_{\gamma}$ is a closed Riemann surface of genus $\gamma$ and $p_{j}\in\overline{M}_{\gamma}$ $(j=1,\ldots,k)$. 
\item[(ii)] $(G, {\omega}^{\sharp})$ can be extended meromorphically to $\overline{M}_{\gamma}$. 
\end{enumerate} 
We call $M$ {\it the basic domain} of the pseudo-algebraic CMC-$1$ surface. 
\end{definition} 

Since we do not assume that $f$ is well-defined on $M$, a pseudo-algebraic CMC-$1$ surface is defined on 
the universal cover of $M$. By Theorem \ref{Huber}, algebraic CMC-$1$ surfaces are certainly pseudo-algebraic. 
We give other important examples. 

\begin{example}\label{Vosscousin}
The dual Weierstrass data is defined on $M=\C\setminus \{a_{1},a_{2},a_{3}\}$ for distinct points $a_{1}, a_{2}, a_{3}\in \C$, by 
\begin{equation}\label{Voss}
G=z, \quad {\omega}^{\sharp}=\dfrac{dz}{\prod_{j}(z-a_{j}) }\, .
\end{equation}
As this data does not satisfy the condition that $f$ is well-defined on $M$, 
we get a CMC-$1$ surface $f\colon {\D}\to {\H}^{3}$ on the universal covering disk ${\D}$ of $M$. 
Since the dual metric $ds^{2\sharp}$ is complete, Lemma \ref{complete} implies that  
the metric $ds^{2}$ is also complete. 
Thus we can see that the surface is pseudo-algebraic and the hyperbolic Gauss map omits four values, 
$a_{1}$, $a_{2}$, $a_{3}$ and $\infty$. Starting from $M=\C\setminus \{a_{1},a_{2}\}$, we get similarly a 
pseudo-algebraic CMC-$1$ surface $f\colon {\D}\to {\H}^{3}$, of which hyperbolic Gauss map omits three values, 
$a_{1}$, $a_{2}$ and $\infty$. The completeness of $ds^{2\sharp}$ restricts the number of points $a_{j}'s$ to be less than 
four. We call these surfaces {\it the Voss cousins} because there exist surfaces which have the same Weierstrass data (these surfaces 
are called the Voss surfaces) in minimal surfaces in ${\R}^{3}$ \cite[Theorem 8.3]{O2}. 
\end{example}

%We consider the transformation $\widehat{F}=aFb^{-1}$ of the holomorphic null lift $F$, 
%where $a$, $b\in SL(2, \C)$. Then $\widehat{F}$ is also holomorphic null immersion, and the hyperbolic 
%Gauss map $\widehat{G}$, the secondary Gauss map $\widehat{g}$ and the Hopf differential $\widehat{Q}$ of 
%$\widehat{F}$ are given by (see \cite{UY3})
%\begin{equation}\label{change}
%\widehat{G}=a\star G,\quad  \widehat{g}=b\star g,\quad  \widehat{Q}=Q\, ,
%\end{equation}
%where, for general $a=(a_{ij})\in SL(2,\C)$, we denote 
%\[
%a\star h:=\dfrac{a_{11}h+a_{12}}{a_{21}h+a_{22}}\, .
%\] 
%In particular, the transformation $\widehat{F}=aF$ moves the surface by rigid motion of $\H^{3}$, and does not 
%change $g$ and $Q$. By choosing a suitable rigid motion $a\in SL(2,\C)$ of the surface in $\H^{3}$, we shall 
%frequently use the following transformation of the hyperbolic Gauss map to simplify its expression: 
%\begin{equation}
%\widehat{G}= a\star G =\dfrac{a_{11}G+a_{12}}{a_{21}G+a_{22}}, \quad (a_{ij})\in SL(2, \C)\, .
%\end{equation}

\section{Ramification estimate for the hyperbolic Gauss map of pseudo-algebraic CMC-$1$ surfaces} 
We first define the totally ramified value number $\nu_{G}$ of $G$.  
\begin{definition}[Nevanlinna \cite{Ne}]
We call $b\in \widehat{\C}$ a totally ramified value of $G$ when at any 
inverse image of $b$, $G$ branches. 
We regard exceptional values also as totally ramified values. 
Let $\{a_1,\dots,a_{r_o},b_1,\dots,b_{l_0}\}\subset \widehat{\C}$ be the set of 
totally ramified values of $G$, where $a_j$'s are exceptional 
values. For each $a_j$, put $\nu_j=\infty$, and for each $b_j$, 
define $\nu_j$ to be the minimum of the multiplicity of $G$ at 
points $G^{-1}(b_j)$. Then  we have $\nu_j\geq 2$.
We call 
\[
\nu_{G}=\sum_{a_j,b_j}\biggl(1-\dfrac1{\nu_j}\biggr)=r_0+\sum_{j=1}^{l_0}\biggl(1-\dfrac1{\nu_j}\biggr)
\]
{\em the totally ramified value number of $G$}.
\end{definition}

%%% a complete Bryant surface %%%
Next, we consider the upper bound on ${\nu}_{G}$. We denote by $D_{G}$ the number of exceptional values of $G$. For the hyperbolic Gauss map of a complete CMC-$1$ surface, 
Z.~Yu \cite{Yu} obtained the upper bound on $D_{G}$. As the application of his argument, we can also get the upper bound on ${\nu}_{G}$ 
for these surfaces. 
\begin{theorem}[Z.~Yu]\label{Zuhan}
Let $f\colon M\to \H^{3}$ be a non-flat complete CMC-$1$ surface and $G$ be the hyperbolic Gauss map of $f$. 
Then we have 
\begin{equation}\label{Yuu}
D_{G}\leq \nu_{G}\leq 4\, .
\end{equation}
\end{theorem}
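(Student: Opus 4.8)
The plan is to deduce both inequalities from the duality relations (\ref{duality}), Lemma \ref{complete}, and the Fujimoto theorem \cite{Fuj1}, the last being the one genuinely deep ingredient. The left inequality $D_{G}\leq \nu_{G}$ needs no work: writing $D_{G}=r_{0}$ for the number of exceptional values, the defining expression
\[
\nu_{G}=r_{0}+\sum_{j=1}^{l_{0}}\Bigl(1-\tfrac{1}{\nu_{j}}\Bigr)
\]
displays $\nu_{G}$ as $D_{G}$ plus a sum of nonnegative terms.

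For $\nu_{G}\leq 4$ the central observation is that the hyperbolic Gauss map of $f$ is the \emph{secondary} Gauss map of the dual surface $f^{\sharp}$, which is what makes Fujimoto's theorem applicable to $G$. That theorem controls the ramification of the meromorphic function $g$ occurring in a complete conformal metric of the form $(1+|g|^{2})^{2}|\omega|^{2}$, and by (\ref{duality}) we have $g^{\sharp}=G$ with dual Weierstrass data $(G,\omega^{\sharp})$, $\omega^{\sharp}=-Q/dG$. Since $G$ and $Q$ are single-valued on $M$, so are $\omega^{\sharp}$ and, recalling (\ref{dualmetric}), the dual metric
\[
ds^{2\sharp}=(1+|G|^{2})^{2}\bigl|\omega^{\sharp}\bigr|^{2}.
\]
Every object involved thus descends to $M$ itself, with no need to pass to the universal cover. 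Completeness of $f$ is completeness of $ds^{2}$, and Lemma \ref{complete} transfers this to $ds^{2\sharp}$. Hence $G$ is a meromorphic function on the open Riemann surface $M$ carrying the complete conformal metric $(1+|G|^{2})^{2}|\omega^{\sharp}|^{2}$, which is precisely the configuration to which Fujimoto's theorem applies.

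Before quoting it I would verify that $G$ is nonconstant, and this is where the non-flatness hypothesis enters: a CMC-$1$ surface with constant hyperbolic Gauss map has all of its oriented normal geodesics converging to a single ideal point, so it is an open piece of a horosphere and hence intrinsically flat. Contrapositively, $f$ non-flat forces $G$ nonconstant, so that Fujimoto's bound applies to the secondary Gauss map $g^{\sharp}=G$ of $f^{\sharp}$ and yields $\nu_{G}=\nu_{g^{\sharp}}\leq 4$.

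The only real obstacle is the Fujimoto theorem itself; everything else is the standard repackaging, due to Yu \cite{Yu}, that converts Fujimoto's ramification bound into one for the hyperbolic Gauss map by exploiting that duality interchanges the hyperbolic and secondary Gauss maps while Lemma \ref{complete} preserves completeness. I would therefore spend care only on the two bookkeeping points --- single-valuedness of $(G,\omega^{\sharp})$ on $M$, and the equivalence of flatness with constancy of $G$ --- and invoke \cite{Fuj1} for the substance.
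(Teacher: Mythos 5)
Your proposal is correct and follows essentially the same route as the paper: both arguments transfer completeness to the dual metric $ds^{2\sharp}=(1+|G|^{2})^{2}|\omega^{\sharp}|^{2}$ via Lemma \ref{complete} and then invoke Fujimoto's ramification theorem for $G$ in the role of a (secondary) Gauss map, the paper doing this by explicitly writing down the minimal surface $x=\Re\int(1-G^{2},\,i(1+G^{2}),\,2G)\,\omega^{\sharp}$ on the universal cover. The only divergence is the final non-flatness step, where the paper computes directly from (\ref{DefofG}) and (\ref{ODE1}) that constant $G$ forces constant $g$, while you argue geometrically via horospheres; both are sound.
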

Note that the Voss cousin (Example \ref{Vosscousin}) shows that the estimate (\ref{Yuu}) is sharp. 
\begin{proof} 
We assume that $M$ is simply connected, otherwise, choose the universal cover of $M$. 
By Lemma \ref{complete}, the dual metric $ds^{2\sharp}$ is also complete. Thus we get a complete minimal surface 
$x\colon M\to {\R}^{3}$ defined by 
\[
x=\Re \int\biggl(1-G^{2},\: i(1+G^{2}),\: 2G\biggr)\,  {\omega}^{\sharp}\, .
\]
In particular, the induced metric of the surface is $ds^{2\sharp}$ and the Gauss map is $G$. By applying 
the Fujimoto theorem \cite[Theorem 1.6.1]{Fuj2}, 
$G$ must be constant if $\nu_{g}>4$. If $G$ is constant, from (\ref{DefofG}), then we obtain 
\begin{equation}\label{Yu1}
F_{11}=GF_{21}+C_{1}, \quad F_{12}=GF_{22}+C_{2}\, ,
\end{equation}
where $C_{1}$ and $C_{2}$ are constant. On the other hand, the secondary Gauss map $g$ of $f$ satisfies 
\begin{equation}\label{Yu2}
g=\frac{GF_{22}-F_{12}}{F_{11}-GF_{21}}\, ,
\end{equation}
because we have
\[
dF=\left(
\begin{array}{cc}
F_{11}g+F_{12} & -g(F_{11}g+F_{12}) \\
F_{21}g+F_{22} & -g(F_{21}g+F_{22})
\end{array}
\right) \omega
\] 
from (\ref{ODE1}). Combining (\ref{Yu1}) and (\ref{Yu2}), we obtain $g=- C_{2}/C_{1}$, that is, $f$ is flat. 
\end{proof}

%%% pseudo-algebraic, algberaic Bryant surface %%%
Moreover, we give more precise estimate for ${\nu}_{G}$ and $D_{G}$ for pseudo-algebraic and algebraic CMC-$1$ surfaces. 
The following is the main result of present paper. 
\begin{theorem}\label{Main1}
Consider a pseudo-algebraic CMC-$1$ surface with the basic domain $M=\overline{M}_{\gamma}\setminus \{p_{1},\ldots,p_{k}\}$. 
Let $d$ be the degree of $G$ considered as a map $\overline{M}_{\gamma}$. Then we have 
\begin{equation}\label{main1}
D_{G}\leq \nu_{G}\leq 2+\frac{2}{R}, \quad \frac{1}{R}=\frac{\gamma -1+k/2}{d}\leq  1\,.
\end{equation}
and for algebraic CMC-$1$ surfaces, we have $R^{-1}<1$. 
\end{theorem}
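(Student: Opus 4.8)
The plan is to exploit that, for a pseudo-algebraic surface, the hyperbolic Gauss map $G$ extends to a holomorphic map $G\colon \overline{M}_{\gamma}\to \widehat{\C}$ of degree $d$, and to combine the Riemann--Hurwitz formula with a careful count of the totally ramified values, keeping track of which preimages sit at the ends $p_{1},\ldots,p_{k}$. First I would record the Riemann--Hurwitz identity
\begin{equation*}
\sum_{p\in\overline{M}_{\gamma}}(e_{p}-1)=2d+2\gamma-2,
\end{equation*}
where $e_{p}$ denotes the multiplicity (local degree) of $G$ at $p$; since the target is $\widehat{\C}$ this is immediate once $G$ is known to extend.

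The heart of the argument is a lower bound for the total branching forced by each totally ramified value. Let the totally ramified values be $a_{1},\dots,a_{r_{0}}$ (exceptional, $\nu_{j}=\infty$) and $b_{1},\dots,b_{l_{0}}$ (with minimal interior multiplicity $\nu_{j}\ge 2$), and for a value $v$ let $s_{v}$ be the number of its preimages lying among the ends $p_{1},\dots,p_{k}$. For a finite totally ramified value, every preimage in $M$ has multiplicity $\ge \nu_{v}$ while the end-preimages carry total multiplicity $\ge s_{v}$; separating the branching over $M$ from that over the ends and using $\sum_{p}e_{p}=d$ on each fibre, I would obtain that the branching produced by $v$ is at least $(d-s_{v})(1-1/\nu_{v})$ (the exceptional case $\nu_{v}=\infty$ being the limiting instance, where all $d$ preimages lie at the ends). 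Summing over the disjoint fibres of the totally ramified values and inserting this into Riemann--Hurwitz gives
\begin{equation*}
d\,\nu_{G}-\sum_{v}s_{v}\le \sum_{v}(d-s_{v})\Bigl(1-\tfrac1{\nu_{v}}\Bigr)\le 2d+2\gamma-2.
\end{equation*}
Because each end has a single image under $G$, one has $\sum_{v}s_{v}\le k$, and therefore $d\,\nu_{G}\le 2d+2\gamma-2+k$, which is exactly $\nu_{G}\le 2+2/R$. The inequality $D_{G}\le\nu_{G}$ is then immediate, since $D_{G}=r_{0}$ and the remaining terms in $\nu_{G}$ are nonnegative.

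It remains to prove $R^{-1}\le 1$, i.e. $\gamma-1+k/2\le d$, and here I would argue from completeness of the dual metric $ds^{2\sharp}=(1+|G|^{2})^{2}|\omega^{\sharp}|^{2}$ together with $\deg\operatorname{div}(\omega^{\sharp})=2\gamma-2$. Nondegeneracy of $ds^{2\sharp}$ on $M$ forces $\omega^{\sharp}$ to vanish to order $2c$ at each interior pole of $G$ of order $c$ and to be holomorphic and nonvanishing elsewhere on $M$, so the interior part of the divisor contributes $2d_{\infty}^{\mathrm{int}}$, where $d_{\infty}^{\mathrm{int}}$ is the number of poles of $G$ in $M$ counted with multiplicity. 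At each end, completeness means the length element $(1+|G|^{2})|\omega^{\sharp}|$ diverges, which translates into $\mathrm{ord}_{p_{j}}(\omega^{\sharp})\le -1$ when $G(p_{j})$ is finite and $\mathrm{ord}_{p_{j}}(\omega^{\sharp})\le 2c_{j}-1$ when $G$ has a pole of order $c_{j}$ there; summing these and using $d=d_{\infty}^{\mathrm{int}}+d_{\infty}^{\mathrm{end}}$ yields $2\gamma-2\le 2d-k$, that is $\gamma-1+k/2\le d$. For an algebraic CMC-$1$ surface every end is regular by Theorem \ref{Huber}, and I expect the strict inequality $R^{-1}<1$ to follow by sharpening the end analysis: the single-valuedness of the genuine immersion on $M$ should obstruct the simultaneous realization of the borderline end behavior, so that at least one end improves the estimate by one; alternatively one may invoke the corresponding Osserman-type inequality for this class.

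The main obstacle I anticipate is the end bookkeeping: in the ramified-value count one must correctly separate interior from end preimages so that the clean factor $(d-s_{v})(1-1/\nu_{v})$ survives, and in the degree estimate one must verify that the completeness conditions at the ends are exactly as stated in every case (finite value versus pole of $G$). The strict inequality in the algebraic case is the most delicate point, since it depends on the finer behavior of $\omega^{\sharp}$ (equivalently of the Hopf differential $Q=-\omega^{\sharp}\,dG$) at the regular ends rather than on completeness alone.
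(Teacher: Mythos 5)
Your derivation of the main estimate is correct and follows essentially the same route as the paper: the bound $R^{-1}\le 1$ comes from the divisor of $\omega^{\sharp}$ on $\overline{M}_{\gamma}$ (degree $2\gamma-2$, zeros of order $2c$ at interior poles of $G$ by nondegeneracy, poles of order $\ge 1$ at the ends by completeness), and the bound $\nu_{G}\le 2+2/R$ comes from Riemann--Hurwitz together with the observation that preimages of exceptional values must sit among the $k$ ends. Your bookkeeping via $s_{v}$ and the single inequality $(d-s_{v})(1-1/\nu_{v})$ for the branching over each totally ramified value is a slightly more uniform packaging than the paper's separate treatment of exceptional values (where it uses $k\ge dr_{0}-n_{0}$) and non-exceptional ones (where it uses $\#G^{-1}(b_{i})\le d/\nu_{i}$), but the content is identical, and both arguments close with $n_{G}=2(d+\gamma-1)$ and $\sum_{v}s_{v}\le k$.

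The genuine gap is the final clause of the theorem, $R^{-1}<1$ for algebraic surfaces, which you only conjecture (``I expect the strict inequality to follow by sharpening the end analysis\dots so that at least one end improves the estimate by one''). This is not something you can extract from completeness and nondegeneracy alone: the Voss cousins of Example \ref{Vosscousin} are pseudo-algebraic, satisfy every hypothesis you have actually used, and realize $R^{-1}=1$, so the strictness must come from the additional requirement that the immersion $f$ itself (not just the dual data $(G,\omega^{\sharp})$) is single-valued on $M$. The paper supplies exactly this missing input by citing Umehara--Yamada \cite[Lemma~3]{UY2}: for an algebraic CMC-$1$ surface the borderline pole order ${\mu}_{j}^{\sharp}-d_{j}=1$ of $\omega^{\sharp}$ cannot occur at \emph{any} end, so ${\mu}_{j}^{\sharp}-d_{j}\ge 2$ for all $j$ and hence $d\ge\gamma-1+k$ (the Osserman-type inequality for this class), which is strictly larger than $\gamma-1+k/2$ once $k\ge 1$. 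Your alternative suggestion of ``invoking the corresponding Osserman-type inequality'' is indeed the right move, but as written your proposal neither proves that inequality nor identifies a precise reference for it, so this part of the statement remains unestablished.
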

\begin{remark}
The geometric meaning of ``2'' in the upper bound of (\ref{main1}) is the Euler number of the Riemann sphere. 
The geometric meaning of the ratio $R$ is given in \cite[Section 6]{KKM}.
\end{remark}
\begin{proof}
By definition, ${\omega}^{\sharp}\, (=- Q/dG)$ is single-valued on $M$ and 
can be extended meromorphically to $\overline{M}_{\gamma}$.
Since the dual metric $ds^{2\sharp}$ is nondegenerate, 
the poles of $G$ of order $k$ coincides exactly with the zeros of ${\omega}^{\sharp}$ of order $2k$. By Lemma \ref{complete}, 
the dual metric $ds^{2\sharp}$ is complete, so $\omega^{\sharp}$ has a pole at each $p_{j}$ \cite{O2}. 
By (\ref{duality}), the order of the pole of $\omega^{\sharp}$ 
at $p_{j}$ is given by 
\[
{\mu_{j}}^{\sharp}-d_{j}\geq 1\, , 
\]
where ${\mu_{j}}^{\sharp}\in \Z$ is the branching order of $G$ at $p_{j}$, 
and $d_{j}:=\text{ord}_{p_{j}}Q$. Applying the Riemann-Roch formula to ${\omega}^{\sharp}$ on $\overline{M}_{\gamma}$, 
we obtain 
\[
2d-\displaystyle \sum_{j=1}^{k}({\mu_{j}}^{\sharp}-d_{j})=2{\gamma}-2\,.
\]
Thus we get 
\begin{equation}\label{Ossermanii}
d={\gamma}-1+\frac{1}{2}\sum_{j=1}^{k}({\mu_{j}}^{\sharp}-d_{j})\geq {\gamma}-1+\frac{k}{2}\, ,
\end{equation}
and 
\begin{equation}
R^{-1}\leq 1\, .
\end{equation}
For algebraic case, Umehara and Yamada \cite[Lemma~3]{UY2} showed that the case ${\mu_{j}}^{\sharp}-d_{j}=1$ cannot occur, 
so ${\mu_{j}}^{\sharp}-d_{j}\geq 2$. Thus we get 
\begin{equation}\label{Ossermani}
d={\gamma}-1+\frac{1}{2}\sum_{j=1}^{k}({\mu_{j}}^{\sharp}-d_{j})\geq {\gamma}-1+k > {\gamma}-1+\frac{k}{2}\, ,
\end{equation}
and 
\begin{equation}
R^{-1}<1\, . 
\end{equation}
Now we prove (\ref{main1}). Assume $G$ omits $r_{0}=D_{G}$ values. Let $n_{0}$ be the sum of the branching orders 
at the inverse image of these exceptional values of $G$. We have 
\begin{equation}
k\geq dr_{0}-n_{0}\, .
\end{equation}
Let $b_{1},\ldots,b_{l_{0}}$ be the totally ramified values which are not exceptional values. 
Let $n_{r}$ be the sum of branching order at the inverse image of $b_{i}$ $(i=1,\ldots,l_{0})$ of $G$. 
For each $b_{i}$, we denote 
\[
\nu_i=\text{min}_{G^{-1}(b_i)}\{\text{multiplicity of }G(z)=b_i\}\,,
\]
then the number of points in the inverse image $G^{-1}(b_{i})$ is less than or equal to $d/{\nu}_{i}.$ 
Thus we obtain
\begin{equation}
dl_{0}-n_{r}\leq \displaystyle \sum_{i=1}^{l_{0}}\frac{d}{{\nu}_{i}}\, .
\end{equation}
This implies 
\begin{equation}
l_{0}-\displaystyle \sum_{i=1}^{l_{0}}\frac{1}{{\nu}_{i}}\leq \frac{n_{r}}{d}\, .
\end{equation}
Let $n_{G}$ be the total branching order of $G$ on $\overline{M}_{\gamma}$. 
Then applying the Riemann-Hurwitz theorem to the meromorphic function $G$ on $\overline{M}_{\gamma}$, 
we obtain 
\begin{equation}
n_{G}=2(d+{\gamma}-1)\, .
\end{equation}
Therefore, we get 
\[
{\nu}_{G}\leq r_{0}+\displaystyle \sum_{i=1}^{l_{0}}\biggl(1-\frac{1}{{\nu}_{i}}\biggr)\leq 
\frac{n_{0}+k}{d}+\frac{n_{r}}{d}\leq \frac{n_{G}+k}{d}\leq 2+\frac{2}{R}\, .
\]
\end{proof} 

The system of inequalities (\ref{main1}) is sharp in some cases :
\begin{enumerate}
\item[(i)] The Voss cousins (Example \ref{Vosscousin}) satisfy $d=1$ and ${\gamma}=0$. Then when $k=3$, we have $R^{-1}=1/2$ and
$D_{G}\leq {\nu}_{G}\leq 3$. When $k=4$, we have $R^{-1}=1$ and $D_{G}\leq {\nu}_{G}\leq 4$. 
These show that (\ref{main1}) is sharp in pseudo-algebraic case. 

\item[(ii)] When $(\gamma, k, d)=(0, 1, n)$ $(n\in \N)$, we have 
\[
{\nu}_{G}\leq 2-\frac{1}{n}\, .
\]  
In this case, we can set $M=\C$. We consider the hyperbolic Gauss map $G$ and the Hopf differential $Q$ on $M$, by 
\begin{equation}\label{Ex.1}
G= z^{n}, \qquad Q=\theta z^{n-1}dz^{2} \qquad (\theta\in \C\setminus\{0\})\,.
\end{equation}
Since $M$ is simply connected, we have no period problem. Thus we obtain a solution $g$ of (\ref{Schwarz}) 
defined on $\C$, and then, one can construct an algebraic CMC-$1$ surface 
$f\colon M\to \H^{3}$ with hyperbolic Gauss map and Hopf differential as in (\ref{Ex.1}). (The case $n=1$, 
the surface is congruent to an Enneper cousin dual given by $(g, Q, G)=(\tan{\sqrt{\theta}}z, {\theta}dz^{2}, z)$.) 
In particular, the hyperbolic Gauss map of the surface has $\nu_{G}=2-(1/n)$. Indeed, it has one 
exceptional value, and another totally ramified value of multiplicity $n$ at $z=0$. 
Thus (\ref{main1}) is sharp for algebraic case, too.  
\item[(iii)] When $(\gamma, k, d)=(0, 2, n)$ $(n\in \N)$, we have 
\[
D_{G}\leq {\nu}_{G}\leq 2\, .
\]
In this case, we can set $M=\C\setminus \{0\}$. On the other hand, 
a catenoid cousin ($n=1$) or its $n$-fold cover ($n\geq 2$) 
$f\colon M=\C\setminus \{0\} \to {\H}^{3}$ 
is given by 
\begin{equation}\label{catenoid}
g=\dfrac{n^{2}-l^{2}}{4l} z^{l},\qquad Q=\dfrac{n^{2}-l^{2}}{4z^{2}}dz^{2}, \qquad G=z^{n}\, ,
\end{equation}
where $l\,(\not= n)$ is a positive number. In particular, the hyperbolic Gauss map of these surfaces has $D_{G}={\nu_{G}}=2$. Thus (\ref{main1}) is sharp. 
\end{enumerate}

%%% nonexistence of ${\nu}_{G} =2.5$ %%%
Now, we consider the case $(\gamma, k, d)=(0, 3, 2). $ By (\ref{main1}), we have ${\nu}_{G}\leq 2.5$.
For minimal surfaces in $\R^{3}$, we can find algebraic minimal surfaces with $\nu_{g}=2.5$ (\cite{Ka1, MS}) and 
the estimate (\ref{main1}) is sharp. However, we cannot find such surfaces for algebraic CMC-$1$ surfaces. 
This property has no analogue in the theory of the Gauss map of algbraic minimal surfaces in ${\R}^{3}$. 
(Note that Rossman and Sato \cite{RS} constructed algebraic CMC-$1$ surfaces (genus one catenoid cousins) 
which have no analogue in the theory of minimal surfaces in $\R^{3}$ \cite{Sc}.)
\begin{proposition}\label{main2}
For the case $(\gamma, k, d)=(0, 3, 2)$, there exist no algebraic CMC-$1$ surfaces with $\nu_{G}=2.5$. 
\end{proposition}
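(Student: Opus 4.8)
The plan is to show that the equality $\nu_G=5/2$ forces a completely rigid pair $(G,Q)$, and then to rule that pair out using the structure of constant mean curvature one surfaces (equivalently, the classification of the $8\pi$ surfaces). Mere value-distribution bookkeeping cannot give the nonexistence, since the forced configuration is perfectly consistent as a pure branched-covering datum; the obstruction must come from the CMC-$1$ condition.

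First I would read off the equality conditions from the proof of Theorem \ref{Main1}. For $(\gamma,k,d)=(0,3,2)$ one has $R^{-1}=1/4$ and $n_G=2(d+\gamma-1)=2$, so $\nu_G=5/2$ can hold only if every inequality in
\[
\nu_G\le r_0+\sum_{i=1}^{l_0}\Bigl(1-\tfrac1{\nu_i}\Bigr)\le\frac{n_0+k}{d}+\frac{n_r}{d}\le\frac{n_G+k}{d}\le 2+\frac2R
\]
is an equality. Since $d=2$, a totally ramified value has a single double preimage, so $\nu_i=2$ for all $i$ and $\nu_G=r_0+l_0/2$; each non-exceptional totally ramified value then accounts for exactly one unit of branching, so $n_r=l_0$, and $n_0+n_r=n_G=2$ gives $n_0=2-l_0\ge 0$. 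Imposing $\nu_G=5/2$ yields $2r_0+l_0=5$ with $l_0\in\{0,1,2\}$, whose only integral solution is $(r_0,l_0)=(2,1)$. Thus $G$ omits exactly two values and has exactly one further totally ramified value, with one branch point lying over an exceptional value (necessarily at an end, since exceptional values are attained only at ends) and one over the non-exceptional totally ramified value (necessarily at an interior point of $M$, for otherwise that value would be omitted, hence exceptional).

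Next I would normalize. By (\ref{Ossermani}) the three quantities $\mu_j^\sharp-d_j$ sum to $2(d-\gamma+1)=6$ and are each $\ge 2$, hence each equals $2$; so the branched end has $\mu^\sharp=1$ and the other two ends have $\mu^\sharp=0$, giving $(\mathrm{ord}_{p_1}Q,\mathrm{ord}_{p_2}Q,\mathrm{ord}_{p_3}Q)=(-1,-2,-2)$. Placing the interior branch point at $z=0$ and the branched end at $z=\infty$ yields $G=z^2$, and after scaling the remaining two ends are $z=\pm1$, both with hyperbolic Gauss image $a_2=1$. The Hopf differential, meromorphic on $\overline{M}_0=\widehat{\C}$ and holomorphic on $M$ with the prescribed pole orders, must therefore be $Q=\dfrac{\alpha z+\beta}{(z^2-1)^2}\,dz^2$ with $\alpha\ne 0$. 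Finally, $ds^{2\sharp}=(1+|G|^2)^2\,|Q/dG|^2$ has to be nondegenerate at the interior point $z=0$, where $dG=2z\,dz$ has a simple zero; this forces $Q$ to vanish there, i.e. $\beta=0$. The data is thus reduced to the one-parameter family $G=z^2$, $Q=\dfrac{\alpha z}{(z^2-1)^2}\,dz^2$, with $\mathrm{TA}(f^\sharp)=4\pi d=8\pi$.

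Finally I would eliminate this family. The surface exists exactly when the secondary Gauss map $g$, determined by $S(g)=S(z^2)+2Q$, is single-valued on $M$ with $SU(2)$ monodromy; concretely the associated Fuchsian equation $u''+\tfrac12 q\,u=0$ with $q=-\tfrac{3}{2z^2}+\tfrac{2\alpha z}{(z^2-1)^2}$ must have trivial projective monodromy around the interior singular point $z=0$ (whose exponents are $3/2,-1/2$) and monodromy conjugate into $PSU(2)$ around the three ends. The route I would take is to note that such a surface lies in the class of algebraic CMC-$1$ surfaces with dual total absolute curvature $8\pi$, which is completely classified in \cite{RUY1}; inspecting that classification shows that no genus-zero, three-ended member realizes the ramification pattern $(r_0,l_0)=(2,1)$ isolated above, so no surface with $\nu_G=5/2$ exists. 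The hard part is precisely this last step: showing that the apparent-singularity condition at $z=0$ together with the $PSU(2)$-monodromy requirement at the ends admits no value of $\alpha$ (equivalently, that the configuration is absent from the classification), since it is here that the constant-mean-curvature constraint, rather than the combinatorics of the Riemann--Hurwitz count, is doing the work.
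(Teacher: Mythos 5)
Your proposal is correct and takes essentially the same route as the paper's proof: the equality analysis forces $(r_0,l_0)=(2,1)$, normalization gives $G=z^2$ with the rigid one-parameter family of Hopf differentials (your $Q=\alpha z\,dz^2/(z^2-1)^2$ is the paper's $Q=\theta\,dz^2/\bigl(z(z-1)^2(z+1)^2\bigr)$ after $z\mapsto 1/z$), and the nonexistence is then read off from the classification in \cite{RUY1}. The ``inspection'' you defer is carried out in the paper by citing \cite[Theorems~4.5 and 4.6]{RUY1}, which say that existence with data $G=z^2$, $Q=\theta\,dz^2/\bigl(z(z-1)^2(z-p)^2\bigr)$ forces $\theta=-2p(p+1)$ together with arithmetic conditions on $p$ that exclude $p=-1$, yielding the contradiction.
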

\begin{proof}
The proof is by contradiction. If there exists, 
then the hyperbolic Gauss map $G$ has two exceptional values and another totally ramified value of multiplicity $2$. 
Without loss of generality, we can set $M=\widehat{\C}\setminus \{0, 1, p\}$ $(p\in \C\setminus\{0, 1\})$ 
and $G$ has branch points of branching order $1$ at $z=0$, $\infty$ and $G$ omits two values, $0$ and $1$. 
Then we have $G=z^{2}$ and $p=-1$. By nondegenerateness of $ds^{2\sharp}$ and 
(\ref{Ossermani}), the Hopf differential $Q$ has a pole of order $2$ (resp. order $1$) at $z= \pm 1$ (resp. $z=0$) 
and has no zeros on $\C$. So $Q$ have the form 
\begin{equation}\label{2.5}
Q=\dfrac{{\theta}dz^{2}}{z(z-1)^{2}(z+1)^{2}} \qquad (\theta\in \C\setminus\{0\})\, .
\end{equation}
On the other hand, Rossman, Umehara and Yamada \cite[Theorem~4.5 and 4.6]{RUY1} proved that 
if there exists an algebraic CMC-$1$ surface $f\colon M\to \H^{3}$ whose hyperbolic 
Gauss map and Hopf differential are given by 
\begin{equation}\label{2.5}
G=z^{2}, \qquad Q=\dfrac{{\theta}dz^{2}}{z(z-1)^{2}(z-p)^{2}} \qquad (\theta\in \C\setminus\{0\})\, ,
\end{equation}
then $p$ satisfies ``$p\in\R$ such that 
$p \not= 1$ and $4/(p-1)\not\in \Z$'' or 
``$p=(r+2)/(r-2)$ where $r$ ($\geq 3$)$\in \Z$'', and $\theta=-2p(p+1)$. 
In particular, $p\not= -1$. 
\end{proof}

%%% the Osserman problem %%%
Finally, we discuss the problem of finding the maximal number of the exceptional values of the hyperbolic Gauss map 
of non-flat algebraic CMC-$1$ surfaces. We call it ``the Osserman problem'' for algebraic CMC-$1$ surfaces. 
As a corollary of Theorem \ref{main1}, we can give some partial results on this problem. 
\begin{corollary}\label{main3}
For non-flat algebraic CMC-$1$ surfaces, we have: 
\begin{enumerate}
\item[(i)] $D_{G}\leq 3$.
\item[(ii)] When ${\gamma}=0$, $D_{G}\leq 2$.  
\item[(iii)] When ${\gamma}= 1$ and the surface has non-embedded regular end, $D_{G}\leq 2$ holds. 
\end{enumerate}
\end{corollary}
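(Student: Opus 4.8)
The plan is to derive all three statements from Theorem \ref{Main1}, using the bound $D_{G}\leq \nu_{G}\leq 2+2/R$ together with the fact that $D_{G}$ is a nonnegative integer, so that any estimate of the form $\nu_{G}<m$ with $m\in\N$ already forces $D_{G}\leq m-1$. For part (i) I would simply quote the algebraic refinement $R^{-1}<1$ established in Theorem \ref{Main1}; then (\ref{main1}) gives $D_{G}\leq \nu_{G}\leq 2+2/R<4$, and integrality yields $D_{G}\leq 3$. For part (ii), set $\gamma=0$ and feed the genus-zero algebraic estimate $d\geq \gamma-1+k=k-1$ from (\ref{Ossermani}) into $R^{-1}=(\gamma-1+k/2)/d=(k-2)/(2d)$. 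When $k\leq 2$ one has $R^{-1}\leq 0$, hence $\nu_{G}\leq 2$ directly; when $k\geq 3$ one gets $R^{-1}\leq (k-2)/(2(k-1))<1/2$, so $2+2/R<3$ and therefore $D_{G}\leq \nu_{G}<3$, i.e.\ $D_{G}\leq 2$. The only computation needed is the elementary inequality $(k-2)/(k-1)<1$.

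For part (iii) the arithmetic is the same in spirit: with $\gamma=1$ one has $R^{-1}=k/(2d)$, so it suffices to promote the generic algebraic bound $d\geq \gamma-1+k=k$ to the \emph{strict} bound $d\geq k+1$, since then $R^{-1}\leq k/(2(k+1))<1/2$ and again $2+2/R<3$, giving $D_{G}\leq 2$. This is exactly where the non-embedded end hypothesis enters. In the Riemann--Roch identity $d=\gamma-1+\tfrac12\sum_{j}(\mu_{j}^{\sharp}-d_{j})$ each regular end contributes $\mu_{j}^{\sharp}-d_{j}\geq 2$, with equality precisely for an embedded regular end; a non-embedded regular end satisfies $\mu_{j}^{\sharp}-d_{j}\geq 3$. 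A single such end then forces $\sum_{j}(\mu_{j}^{\sharp}-d_{j})\geq 3+2(k-1)=2k+1$, whence $d\geq k+\tfrac12$, and so $d\geq k+1$ by integrality of $d$.

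Note that the non-embeddedness assumption is genuinely needed here: if every end were embedded then $d=k$ and $R^{-1}=1/2$, so (\ref{main1}) would give only $\nu_{G}\leq 3$ and hence merely $D_{G}\leq 3$, recovering part (i) but not the sharper conclusion. It is precisely the strict improvement $d\geq k+1$ that pushes $\nu_{G}$ below the integer threshold $3$.

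The step I expect to be the main obstacle is not the counting but the justification of the end dichotomy used in (iii): that an embedded regular end is characterized by $\mu_{j}^{\sharp}-d_{j}=2$, so that a non-embedded one has $\mu_{j}^{\sharp}-d_{j}\geq 3$. This lies outside the global estimate (\ref{main1}) and requires the local classification of regular ends of CMC-$1$ surfaces; I would isolate it as a cited lemma (cf.\ the end analysis in \cite{UY2}) and sanity-check it against the catenoid cousins of (\ref{catenoid}), where the embedded case $n=1$ has $\mu_{j}^{\sharp}-d_{j}=2$ while the non-embedded $n$-fold cover ($n\geq 2$) realizes $\mu_{j}^{\sharp}-d_{j}=n+1\geq 3$, exactly as required.
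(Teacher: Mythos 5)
Your proof is correct and follows essentially the same route as the paper: part (i) is identical, and parts (ii)--(iii) are just the direct (contrapositive-free) reorganization of the paper's argument, resting on the same two ingredients, namely the bound $d\geq\gamma-1+k$ coming from ${\mu}_{j}^{\sharp}-d_{j}\geq 2$ and the characterization of embedded regular ends by ${\mu}_{j}^{\sharp}-d_{j}=2$ (which the paper likewise imports from \cite{CHR1,UY2}). The paper instead assumes $D_{G}\geq 3$, deduces $R^{-1}\geq 1/2$ and hence $\gamma\geq 1$, and for $\gamma=1$ forces equality ${\mu}_{j}^{\sharp}-d_{j}=2$ for all $j$; this is arithmetically the same computation you perform.
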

\begin{proof}
For algebraic CMC-$1$ surfaces, we have $R^{-1}<1$. Thus we get $D_{G}<4$, that is, $D_{G}\leq 3$.  
Next we prove (ii) and (iii). It is obvious to see that $D_{G}\geq 3$ implies $R^{-1}\geq 1/2$. Thus we get 
\[
{\gamma}-1+\frac{1}{2}\displaystyle \sum_{j=1}^{k}({\mu}_{j}^{\sharp}-d_{j}) \leq 2({\gamma}-1)+k\, . 
\]
As we have ${\mu}_{j}^{\sharp}-d_{j}\geq 2$, it follows 
\begin{equation}\label{D=3}
k\leq \frac{1}{2}\displaystyle \sum_{j=1}^{k}({\mu}_{j}^{\sharp}-d_{j})\leq {\gamma}-1+k\, . 
\end{equation}
Thus we obtain (ii). When ${\gamma}=1$, (\ref{D=3}) implies ${\mu}_{j}^{\sharp}-d_{j}=2$ for all $j$, 
which means all ends are regular and properly embedded (\cite{CHR1,UY2}).
Therefore, if the surface has non-embedded end, then $D_{G}\leq 2$. 
\end{proof}

A catenoid cousin, examples in \cite[Theorem 4.7]{RUY1} and the following proposition 
show that (ii) of Corollary \ref{main3} is sharp. 
\begin{proposition}\label{main4}
There exists an algebraic CMC-$1$ surface $f\colon M=\C\setminus \{0, 1\}\to \H^{3}$ with TA$(f^{\sharp})=-12\pi$ 
whose hyperbolic Gauss map $G$ and Hopf differential $Q$ are given by 
\begin{equation}\label{main3-1}
G=\biggl(\frac{z-1}{z}\biggr)^{3}, \quad Q=\dfrac{{\theta}dz^{2}}{z(z-1)} \quad ({\theta}=-2, -6)\, .
\end{equation} 
In particular, $G$ omits two values, $1$ and $\infty$. 
\end{proposition}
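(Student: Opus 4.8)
The plan is to realize the prescribed pair $(G,Q)$ through the Bryant representation (Theorem \ref{W-rep}) together with the duality relations (\ref{duality})--(\ref{Schwarz}), and to reduce the \emph{existence} of the surface to a monodromy (period) problem on the thrice-punctured sphere $M=\widehat{\C}\setminus\{0,1,\infty\}$ (so $\gamma=0$, $k=3$). First I would solve the Schwarzian equation $S(g)=S(G)+2Q$ of (\ref{Schwarz}) for a secondary Gauss map $g$, set $\omega=Q/dg$, integrate the null equation (\ref{ODE1}) on the universal cover to produce the holomorphic null lift $F$, and form $f=FF^{*}$. Equivalently one works with the single-valued dual data $(G,\omega^{\sharp})$, where $\omega^{\sharp}=-Q/dG=-\tfrac{\theta}{3}z^{3}(z-1)^{-3}\,dz$ is an honest meromorphic $1$-form on $\widehat{\C}$. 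This construction always yields a CMC-$1$ immersion of the disk; the content of the proposition is that it descends to a \emph{complete, well-defined} surface on $M$ precisely when $\theta=-2,-6$.

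Next I would identify the Schwarzian equation as hypergeometric. Using $u=(z-1)/z$ and $S(u^{3})=-4u^{-2}$ one finds $S(G)=-\tfrac{4}{z^{2}(z-1)^{2}}\,dz^{2}$, so that
\[
S(g)=\Bigl[-\frac{4}{z^{2}(z-1)^{2}}+\frac{2\theta}{z(z-1)}\Bigr]dz^{2},
\]
whose associated linear equation $y''+\tfrac12(\cdots)y=0$ is Fuchsian with regular singular points exactly at $0,1,\infty$. Reading off the indicial roots, at $z=0$ and $z=1$ they are the integers $-1,2$ \emph{for every} $\theta$ (so the exponent difference is $3$), while at $\infty$ the exponent difference is $\sqrt{1-4\theta}$, equal to $3$ for $\theta=-2$ and to $5$ for $\theta=-6$. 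Thus $g$ is the developing map of a rigid projective structure with hypergeometric data $(3,3,3)$ or $(3,3,5)$.

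The hard part is the period problem, and this is exactly where the values $\theta=-2,-6$ are singled out. By the Umehara--Yamada criterion \cite{UY2}, $f=FF^{*}$ descends to $M$ if and only if the $SL(2,\C)$-monodromy of the construction is conjugate into $SU(2)$. Here I can make this explicit: since the \emph{individual} indicial roots at $z=0,1$ are integers, the local monodromies there are \emph{unipotent}, and the only unipotent element of $SU(2)$ is the identity; hence the $SU(2)$-condition forces trivial monodromy at $0$ and $1$, i.e.\ the absence of logarithmic terms. The sole resonance is at the Frobenius index $m=3$, and a short computation with the recurrence shows the no-log (apparent-singularity) condition there is the cubic
\[
\theta(\theta+2)(\theta+6)=0 ,
\]
so $\theta\in\{0,-2,-6\}$; by the symmetry $z\mapsto 1-z$ (which fixes $Q$ and sends $G\mapsto 1/G$) the condition at $z=1$ is identical. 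Discarding the flat case $\theta=0$ ($Q\equiv0$) leaves exactly $\theta=-2,-6$; for these the exponents at $\infty$ are also integers, and since $M_{0}=M_{1}=\mathrm{id}$ the relation $M_{0}M_{1}M_{\infty}=\mathrm{id}$ gives $M_{\infty}=\mathrm{id}$, so there is no logarithmic term at $\infty$ either. Equivalently, for these two values $g$ is a single-valued rational map of degree $4$ and $5$, so there is no period obstruction. Verifying this no-log condition is the main difficulty; alternatively one invokes the classification of Rossman--Umehara--Yamada \cite{RUY1}.

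Finally I would record completeness and the remaining data. From $\omega^{\sharp}=-\tfrac{\theta}{3}z^{3}(z-1)^{-3}\,dz$ together with $G(0)=\infty$, $G(1)=0$, $G(\infty)=1$, a local computation shows the dual metric $ds^{2\sharp}=(1+|G|^{2})^{2}|\omega^{\sharp}|^{2}$ blows up like $|z|^{-6}|dz|^{2}$, $|z-1|^{-6}|dz|^{2}$ and $|t|^{-4}|dt|^{2}$ ($t=1/z$) at the three ends, so each end is complete; by Lemma \ref{complete}, $ds^{2}$ is complete and $f$ is algebraic. The dual total absolute curvature is $\mathrm{TA}(f^{\sharp})=\int_{M}\tfrac{4|dG|^{2}}{(1+|G|^{2})^{2}}=4\pi\deg G=12\pi$ (so the total curvature is $-12\pi$). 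Lastly, $G=((z-1)/z)^{3}$ attains every value of $\widehat{\C}$ except the two it takes only at ends, whence $D_{G}=2$; this in particular shows that part (ii) of Corollary \ref{main3} is sharp.
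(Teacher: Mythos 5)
Your proposal is correct and follows the same underlying route as the paper: both reduce existence to the vanishing of the log-term coefficients of the Fuchsian equation $u''+r(z)u=0$ with $r\,dz^{2}=\tfrac12 S(G)+Q$ at the resonant exponents $-1,2$ of $z=0$ and $z=1$, and both land on $\theta=-2,-6$. The difference is one of self-containedness: the paper simply checks the hypotheses of Lemma \ref{Const} (= \cite[Prop.~2.2]{RUY1}) and quotes the log-term formula \cite[(A.16)]{RUY1}, whereas you derive the two ingredients in-line --- the Frobenius recurrence at the resonance $m=3$ does give exactly $-\tfrac{\theta}{4}(\theta+2)(\theta+6)=0$ (I checked: with $r=-2z^{-2}-(4+\theta)z^{-1}-(6+\theta)-(8+\theta)z-\cdots$ one gets $a_{1}=-(4+\theta)/2$, $a_{2}=(4+\theta)^{2}/4-(6+\theta)/2$, and the obstruction $r_{-1}a_{2}+r_{0}a_{1}+r_{1}a_{0}=-\tfrac{\theta}{4}(\theta+2)(\theta+6)$), and your unipotent-in-$SU(2)$ argument is precisely the mechanism behind Lemma \ref{Const}(i). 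Your version buys an explicit verification of the constant that the paper imports, plus the completeness, $\mathrm{TA}$, and $D_{G}=2$ checks the paper leaves implicit; the paper's version is shorter because the classification machinery of \cite{RUY1} is already available. One small point in your favor: the omitted values are those attained only at ends, namely $0$ (only at $z=1$) and $\infty$ (only at $z=0$), since $G^{-1}(1)$ contains the two interior points $1/(1-\omega)$, $\omega$ a primitive cube root of unity --- so the statement's ``$1$ and $\infty$'' appears to be a slip for ``$0$ and $\infty$'', and your phrasing avoids it.
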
 
In general, to find algebraic CMC-$1$ surfaces on a non-simply connected Riemann surface is not so easy. 
Because the period problem, that is, to get a 
proper solution of (\ref{Schwarz}) (see \cite{UY3} or \cite[Lemma 2.1]{RUY1}) on $M$ causes trouble. 
However, Rossman, Umehara and Yamada 
obtained the following construction for the case ${\gamma}=0$ with $n$ ends. 
For the terminology of ordinary differential equation theory, we refer the reader to \cite{CL} and \cite[Appendix A]{RUY1}. 

\begin{lemma}\label{Const}\cite[Proposition 2.2]{RUY1} Let $M=\widehat{\C}\setminus \{p_{1},\ldots,p_{k}\}$ with 
$p_{1},\ldots,p_{k-1}\in \C$. Let $G$ and $Q$ be a meromorphic function and a meromorphic $2$-differential 
on $\widehat{\C}$ satisfying the following two conditions: 
\begin{enumerate}
\item[(a)] For all $q\in M$, $\mathrm{ord}_{q}Q$ is equal to the branching order of $G$, {\rm and}
\item[(b)] for each $p_{j}$, ${\nu}_{j}^{\sharp}-d_{j}\geq 2$\, .
\end{enumerate}
Consider the linear ordinary differential equation 
\begin{equation}\tag{E.0}
\dfrac{d^{2}u}{dz^{2}}+r(z)u=0\, ,
\end{equation}
where $r(z)dz^{2}:=(S(G)/2)+Q$. Suppose $k\geq 2$, and also $d_{j}=\mathrm{ord}_{p_{j}}Q\geq -2$ and 
the indicial equation of $(\mathrm{E.0})$ at $z=p_{j}$ has two distinct roots ${\lambda}_{1}^{(j)}$, ${\lambda}_{2}^{(j)}$ 
and log-term coefficient $c_{j}$, for $j=1, 2, \ldots, k-1$. 
\begin{enumerate}
\item[(i)] Suppose that ${\lambda}_{1}^{(j)}-{\lambda}_{2}^{(j)}\in \Z^{+}$ and $c_{j}=0$ for $j\leq k-1$. 
Then there is exactly a $3$-parameter family of algebraic CMC-$1$ surfaces of $M$ into ${\H}^{3}$ with hyperbolic 
Gauss map $G$ and Hopf differential $Q$. Moreover, such surfaces are $\mathscr{H}^{3}$-reducible. 
\item[(ii)] Suppose that ${\lambda}_{1}^{(j)}-{\lambda}_{2}^{(j)}\in \Z^{+}$ and $c_{j}=0$ for $j\leq k-2$, 
and that ${\lambda}_{1}^{(k-1)}-{\lambda}_{2}^{(k-1)}\in \R\setminus\Z$. Then there exists exactly a $1$-parameter family 
of algebraic CMC-$1$ surfaces of $M$ into ${\H}^{3}$ with hyperbolic Gauss map $G$ and Hopf differential $Q$. 
Moreover, such surfaces are $\mathscr{H}^{1}$-reducible. 
\end{enumerate}
Here we denoted by $\Z^{+}$ the set of positive integers. 
\end{lemma}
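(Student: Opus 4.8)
The plan is to reconstruct the surface from the prescribed pair $(G,Q)$ by solving for a secondary Gauss map and then controlling the period (monodromy) problem through the local theory of the regular singular ODE $(\mathrm{E.0})$. The starting point is the relation (\ref{Schwarz}): a secondary Gauss map $g$ compatible with $G$ and $Q$ must satisfy $S(g)=S(G)+2Q=2r(z)\,dz^{2}$, where $r\,dz^{2}=(S(G)/2)+Q$. By the classical correspondence between Schwarzian equations and second order linear ODEs, the solutions $g$ are exactly the ratios $g=u_{1}/u_{2}$ of a fundamental system $u_{1},u_{2}$ of $(\mathrm{E.0})$, and $g$ is determined up to post-composition by an element of $PSL(2,\C)$. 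Since $G$ and $Q$ are single-valued on $M$, the coefficient $r$ is a single-valued meromorphic function on $\widehat{\C}$ and $(\mathrm{E.0})$ is a globally defined linear ODE on $M$.

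First I would record how the surface is recovered and why (a) and (b) guarantee that it is algebraic. Given a local solution $g$, set $\omega=Q/dg$; then $(g,\omega)$ is Weierstrass data in the sense of Theorem \ref{W-rep}, its Hopf differential is $\omega\,dg=Q$, and by (\ref{Schwarz}) the hyperbolic Gauss map $G'$ of the resulting surface satisfies $S(G')=S(g)-2Q=S(G)$, so $G'$ agrees with the prescribed $G$ up to a Möbius transformation. Condition (a) forces the zeros of $Q$ to cancel the branch points of $G$, so that ${\omega}^{\sharp}=-Q/dG$ is holomorphic and nonvanishing on $M$ away from the poles of $G$ (where one recovers the order-$2k$ vanishing as in the proof of Theorem \ref{Main1}); hence $ds^{2\sharp}$ is nondegenerate. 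Condition (b) together with $d_{j}\geq -2$ makes ${\omega}^{\sharp}$ have a genuine pole at each $p_{j}$, and the Riemann--Roch computation of Theorem \ref{Main1} then yields finite dual total absolute curvature, while Lemma \ref{complete} upgrades completeness of $ds^{2\sharp}$ to completeness of $ds^{2}$. Thus any single-valued surface produced this way is automatically algebraic with the desired $(G,Q)$.

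The heart of the argument is the period problem. The holomorphic null lift $F$ is built from the fundamental system of $(\mathrm{E.0})$, so its monodromy along $\pi_{1}(M)$ is the monodromy representation $\rho\colon\pi_{1}(M)\to SL(2,\C)$ of $(\mathrm{E.0})$. Writing a candidate surface in the gauged form $f_{H}=FHF^{\ast}$ for a positive definite Hermitian matrix $H$, single-valuedness on $M$ is equivalent to $\rho(\gamma)H\rho(\gamma)^{\ast}=H$ for all $\gamma$, i.e. to $H$ being a $\rho$-invariant positive definite Hermitian form; in particular $\rho$ must be unitarizable. The non-congruent algebraic CMC-$1$ surfaces with data $(G,Q)$ are therefore parametrized by such $H$ modulo positive scaling. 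To analyze $\rho$ I would use that $d_{j}\geq -2$ makes each $p_{j}$ a regular singular point of $(\mathrm{E.0})$, whose local monodromy is governed by the indicial roots ${\lambda}_{1}^{(j)},{\lambda}_{2}^{(j)}$ and the log-term coefficient $c_{j}$. Since $p_{1},\dots,p_{k-1}\in\C$ and $p_{k}=\infty$, the group $\pi_{1}(M)$ is generated by loops around $p_{1},\dots,p_{k-1}$, so the global representation $\rho$ is determined by these local monodromies.

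In case (i), the hypotheses ${\lambda}_{1}^{(j)}-{\lambda}_{2}^{(j)}\in\Z^{+}$ and $c_{j}=0$ make each local monodromy a scalar (central) matrix, hence $\rho$ takes values in $\{\pm\mathrm{id}\}$; every Hermitian form is then invariant, the positive definite ones modulo scale form a $3$-dimensional family, and the surfaces are $\mathscr{H}^{3}$-reducible. In case (ii), the extra condition ${\lambda}_{1}^{(k-1)}-{\lambda}_{2}^{(k-1)}\in\R\setminus\Z$ gives a diagonalizable, non-central local monodromy at $p_{k-1}$ with unitary eigenvalues, so $\rho$ is reducible with a single invariant line; the invariant Hermitian forms are then the diagonal ones, a $1$-dimensional family modulo scale, and the surfaces are $\mathscr{H}^{1}$-reducible. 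The main obstacle is exactly this monodromy bookkeeping: one must verify that the stated indicial and log-term conditions genuinely force unitarizability of the global representation (not merely reducibility), pin down the prescribed $G$ as the true hyperbolic Gauss map within the $PSL(2,\C)$ ambiguity rather than only projectively, and confirm that the space of invariant positive definite Hermitian forms modulo scale has dimension exactly $3$, respectively $1$. The remaining completeness and end-regularity assertions then follow from (a), (b) and the finiteness of the dual total absolute curvature as above.
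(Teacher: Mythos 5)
The paper itself contains no proof of this lemma: it is quoted, with attribution, from \cite[Proposition~2.2]{RUY1}, so the only meaningful comparison is with the argument in that source. Your reconstruction follows essentially the same route as the cited proof: solutions $g$ of $S(g)=S(G)+2Q$ are ratios of solutions of (E.0); a surface with data $(G,Q)$ descends from the universal cover to $M$ precisely when the $SL(2,\C)$-monodromy representation $\rho$ of (E.0) admits an invariant positive definite Hermitian form $H$ (equivalently, writing $H=aa^{\ast}$, when $a^{-1}\rho a$ is unitary); conditions (a) and (b) give nondegeneracy and completeness of $ds^{2\sharp}$, hence of $ds^{2}$ by Lemma \ref{complete}; and the two families are exactly the admissible $H$ modulo scale, namely all of $SL(2,\C)/SU(2)\cong{\H}^{3}$ in case (i) and the diagonal ones, a line, in case (ii). Moreover, the three verifications you defer as ``the main obstacle'' are all short given what you have already set up. Unitarizability in case (ii): the indicial roots of $u''+ru=0$ at a regular singular point sum to $1$, so if their difference is real both roots are real; hence the local monodromy at $p_{k-1}$ is diagonalizable with unit-modulus eigenvalues and determinant one, and after conjugation it lies in $SU(2)$, so together with the scalar monodromies $\pm\mathrm{id}$ at $p_{j}$, $j\leq k-2$, the whole representation is unitary --- reducibility alone is never the issue. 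The projective ambiguity in $G$: left multiplication $F\mapsto bF$ moves the hyperbolic Gauss map by the M\"obius action of $b$ while fixing $(g,\omega)$ and the right monodromy, so composing with a suitable rigid motion of ${\H}^{3}$ replaces the M\"obius transform of $G$ by $G$ itself at no cost. Exactness of the parameter counts: the null lift is unique up to right multiplication by $SU(2)$, and $F^{-1}bF$ constant forces $b=\pm\mathrm{id}$ when $G$ is nonconstant, so distinct $H$ (mod scale) give non-congruent surfaces. Two minor slips to correct: in case (ii) the diagonalizable monodromy at $p_{k-1}$ has \emph{two} invariant lines, not ``a single invariant line'' (what your argument actually needs is only that the invariant Hermitian forms are diagonal in the eigenbasis, which is true since $\lambda_{1}^{(k-1)}-\lambda_{2}^{(k-1)}\notin\Z$); and in case (i) you should justify that the scalar local monodromies are $\pm\mathrm{id}$ rather than arbitrary scalars --- this follows because the indicial roots sum to $1$ and differ by an integer, so $2\lambda_{1}^{(j)}\in\Z$, and the Wronskian forces $\det\rho(\gamma)=1$.
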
 
See \cite{RUY0} for the definition of $\mathscr{H}^{1}$-reducible and $\mathscr{H}^{3}$-reducible. Now, we prove Proposition \ref{main4} below.
\begin{proof}[{\rm Proof of Proposition 2.7}]
It is easy to see that $G$ and $Q$ in ($\ref{main3-1}$) satisfy the assumptions (a) and (b) in Lemma \ref{Const}. 
Consider equation (E.0). 
Then the roots of the indicial equations of (E.0) are $-1$ and $2$ at both $z=0$ and at $z=1$. 
By \cite[Appendix A, (A.16)]{RUY1} the log-term coefficients at $z=0$ and at $z=1$ both vanish if and only if 
${\theta}=-2, -6$. By Lemma \ref{Const} (i), the corresponding algebraic CMC-$1$ surfaces are well-defined on $M$. 
\end{proof} 
\begin{remark} Proposition \ref{main4} also provides the following result:  
{\it Any $\mathscr{H}^{3}$-reducible algebraic CMC-$1$ surface that is of type {\bf{O}}$(-1, -1, -2)$ with 
$\mathrm{TA}(f^{\sharp})=-12\pi$ is congruent to an algebraic CMC-$1$ surface $f\colon M=\C\setminus \{0, 1\}\to \H^{3}$ 
whose hyperbolic Gauss map $G$ and Hopf differential $Q$ are as in $($\ref{main3-1}$)$. Here, we say that 
$f$ is a surface of type {\bf{O}}$(d_{1},\ldots, d_{k})$ if $M=\widehat{\C}\setminus \{p_{1},\ldots,p_{k}\}$ and 
$Q$ has order $d_{j}$ at each end $p_{j}$.} 
\end{remark}

However, we do not know whether (i) and (iii) in Corollary \ref{main3} are sharp or not. 
Because we know few algebraic CMC-$1$ surfaces of ${\gamma}\geq 1$ 
and have no definite solution to the global period problem for this class. 

%\begin{problem}
%Is the best possible upper bound on the number of the exceptional values of the hyperbolic Gauss map 
%of a non-flat algebraic Bryant surface ``$2$'' or ``3''?
%\end{problem}

\section{Value distribution of the hyperbolic Gauss map of complete CMC-$1$ faces in de Sitter $3$-space} 
We first briefly recall definitions and basic facts on complete CMC-$1$ faces in de Sitter $3$-space. 
For more details, we refer the reader to \cite{Fu} and \cite{FRUYY}. 
For all of this section, we use the same notation as in Sections $1$ and $2$. 
In the Lorentz-Minkowski $4$-space ${\R}^{4}_{1}$ with the Lorentz metric given by (\ref{Lmetric}), de Sitter $3$-space can be realized by 
\[
{\Si}^{3}_{1}=\{(x_{0}, x_{1}, x_{2}, x_{3})\in \R^{4}_{1} \, | \, -(x_{0})^{2}+(x_{1})^{2}+(x_{2})^{2}+(x_{3})^{2}=1 \}
\]
with metric induced from $\R^{4}_{1}$, which is a simply-connected Lorentzian $3$-manifold with constant sectional curvature 
$1$. By the identification (\ref{Hermite}), ${\Si}^{3}_{1}$ is represented as 
\begin{equation}\label{deSitter1}
{\Si}^{3}_{1}=\{Fe_{3}F^{\ast}\,|\, F\in SL(2,\C)\}, 
\end{equation}
with the metric 
\[
(X, Y) = -\trace{X\widetilde{Y}}-\trace{(Xe_{2}\tr{Y}e_{2})}, 
\]
where $\widetilde{Y}$ is the cofactor matrix of $Y$ and 
\begin{equation}
e_{0}=\left(
\begin{array}{cc}
1 & 0 \\
0 & 1
\end{array}
\right), e_{1}=\left(
\begin{array}{cc}
0 & 1 \\
1 & 0
\end{array}
\right), e_{2}= \left(
\begin{array}{cc}
0 & i \\
-i & 0
\end{array}
\right), e_{3}=\left(
\begin{array}{cc}
1 & 0 \\
0 & -1
\end{array}
\right)\, .
\end{equation}

%Def. of CMC-1 face%
An immersion into ${\Si}^{3}_{1}$ is said to be {\it spacelike} if the induced metric on the immersed surface is positive definite. 
Aiyama-Akutagawa \cite{AA} gave a Weierstrass-type representation formula for spacelike CMC-$1$ immersions in ${\Si}^{3}_{1}$. 
Moreover, Fujimori \cite{Fu} defined spacelike CMC-$1$ surfaces with certain  kind of singularities as ``CMC-$1$ faces'' and
extended the notion of spacelike CMC-$1$ surfaces.

\begin{definition}
Let $M$ be an oriented $2$-manifold. A $C^{\infty}$-map $f\colon M\to {\Si}^{3}_{1}$ is called a {\it CMC-$1$ face} if 
\begin{enumerate}
\item[($1$)] there exists an open dense subset $W\subset M$ such that $f|_{W}$ is a spacelike CMC-$1$ immersion, 
\item[($2$)] for any singular point (that is, a point where the induced metric degenerates) $p$, there exists a $C^{1}$-differentiable 
function $\lambda\colon U\cap W\to (0,\infty)$, defined on the intersection of neighborhood $U$ of $p$ with $W$, 
such that ${\lambda}ds^{2}$ extends to a $C^{1}$-differentiable Riemannian metric on $U$, where $ds^{2}$ is the 
first fundamental form, i.e., the pull-back of the metric of ${\Si}^{3}_{1}$ by $f$, and 
\item[($3$)] $df(p)\not=0$ for any $p\in M$. 
\end{enumerate}
\end{definition}

A $2$-manifold $M$ on which a CMC-$1$ face $f\colon M\to {\Si}^{3}_{1}$ is defined always has complex 
structure (see \cite{Fu}). So we will regard $M$ as a Riemann surface. The Weierstrass-type representation formula 
in \cite{AA} can be extended for CMC-$1$ faces as follows (\cite[Theorem 1.9]{Fu}): 

%W-representation for CMC-1 faces%
\begin{theorem}[Aiyama-Akutagawa, Fujimori]
Let $\widetilde{M}$ be a simply connected Riemann surface with a reference point $z_{0}\in \widetilde{M}$. 
Let $g$ be a meromorphic function and $\omega$ be a 
holomorphic $1$-form on $\widetilde{M}$ such that 
\begin{equation}\label{metric2}
d\hat{s}^{2}=(1+|g|^{2})^{2}|\omega|^{2}
\end{equation}
is a Riemannian metric on $\widetilde{M}$ and $|g|$ is not identically $1$.  
Take a holomorphic immersion $F=(F_{jk})\colon \widetilde{M}\to SL(2,\C)$ satisfying $F(z_{0})=e_{0}$ and 
\begin{equation}\label{ODE2}
F^{-1}dF=\left(
\begin{array}{cc}
 g & -g^{2} \\
 1 & -g
\end{array}
\right)\omega\, .
\end{equation}
Then $f\colon \widetilde{M}\to {\Si}^{3}_{1}$ defined by 
\begin{equation}\label{imm.2}
f=Fe_{3}F^{\ast}
\end{equation}
is a CMC-$1$ face which is conformal away from its singularities. The induced metric $ds^{2}$ on $\widetilde{M}$, 
the second fundamental form $h$ and the Hopf differential $Q$ of $f$ are given as follows: 
\begin{equation}\label{Value2}
ds^{2}=(1-|g|^{2})^{2}|\omega|^{2},\quad h= Q+\overline{Q}+ds^{2}, \quad  Q={\omega}dg\, .
\end{equation}
The singularities of the CMC-$1$ face occur at points where $|g|=1$. 

Conversely, for any CMC-$1$ face $f\colon \widetilde{M}\to {\Si}^{3}_{1}$, there exist a meromorphic function $g$ 
$($with $|g|$ not identically $1$$)$ and a holomorphic $1$-form $\omega$ on $\widetilde{M}$ so that $d{\hat{s}}^{2}$ is a 
Riemannian metric on $\widetilde{M}$ and $(\ref{imm.2})$ holds, where the map $F\colon \widetilde{M}\to SL(2,\C)$ 
is a holomorphic null immersion satisfying $(\ref{ODE2})$. 
\end{theorem}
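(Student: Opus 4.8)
The plan is to follow the scheme used for Theorem~\ref{W-rep} in $\H^{3}$, replacing the model $\{aa^{\ast}\}$ of $\H^{3}$ by the model ${\Si}^{3}_{1}=\{Fe_{3}F^{\ast}\}$ of (\ref{deSitter1}); the extra factor $e_{3}$ is precisely what converts the hyperbolic conformal factor $(1+|g|^{2})^{2}$ into the Lorentzian factor $(1-|g|^{2})^{2}$ and thereby creates the singular locus $\{|g|=1\}$. Since $|g|$ is not identically $1$, this locus is nowhere dense, so the spacelike region $W=\{|g|\neq 1\}$ is open and dense.

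For the direct part, I would first record that
\[
\alpha:=F^{-1}dF=\left(\begin{array}{cc} g & -g^{2}\\ 1 & -g\end{array}\right)\omega
\]
is trace-free with $\det\alpha=0$, so on the simply connected surface $\widetilde{M}$ the equation (\ref{ODE2}) integrates to a holomorphic null immersion $F$ into $SL(2,\C)$. Writing $\alpha=A\,dz$ and using that $F$ is holomorphic (hence $F^{\ast}$ antiholomorphic), differentiation of $f=Fe_{3}F^{\ast}$ gives $F^{-1}f_{z}(F^{\ast})^{-1}=Ae_{3}$ and $F^{-1}f_{\bar z}(F^{\ast})^{-1}=e_{3}A^{\ast}$. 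Because the $SL(2,\C)$-action $X\mapsto aXa^{\ast}$ preserves ${\Si}^{3}_{1}$ and acts by isometries (as in (\ref{action})), so that $(aXa^{\ast},aYa^{\ast})=(X,Y)$, taking $a=F(p)$ reduces the induced metric at $p$ to the bilinear form evaluated on $Ae_{3}$ and $e_{3}A^{\ast}$. A short matrix computation then gives $(Ae_{3},Ae_{3})=0$, which is conformality, and $(Ae_{3},e_{3}A^{\ast})$ proportional to $(1-|g|^{2})^{2}|\omega|^{2}$, yielding the induced metric $ds^{2}=(1-|g|^{2})^{2}|\omega|^{2}$ of (\ref{Value2}) and locating the singularities exactly where $|g|=1$.

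To complete the direct part I would run the same moving-frame bookkeeping for the curvature: building the (timelike) unit normal from $F$ and $e_{3}$ and pairing $df$ with its derivative produces $h=Q+\overline{Q}+ds^{2}$ with $Q=\omega\,dg$, so the trace-free part of $h$ equals $Q+\overline{Q}=2\,\mathrm{Re}\,Q$. Since $Q$ is a holomorphic quadratic differential, this simultaneously forces $H\equiv 1$ (the geometric content encoded in the null ODE (\ref{ODE2})) and identifies $Q$ as the Hopf differential. These are routine but sign-sensitive calculations, and the only care needed is to track the signs introduced by the factor $e_{3}$.

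For the converse, I would restrict $f$ to the open dense set $W$ where it is a spacelike CMC-$1$ immersion and invoke the Aiyama-Akutagawa representation to obtain meromorphic data $(g,\omega)$ and, after lifting to the simply connected $\widetilde{M}$ (so that no monodromy obstructs single-valuedness), a holomorphic null lift $F$ with $F^{-1}dF$ of the form (\ref{ODE2}). The delicate and, I expect, main step is then to extend $g$, $\omega$ and $F$ holomorphically across the singular set $M\setminus W=\{|g|=1\}$ and to verify conditions (1)--(3) in the definition of a CMC-$1$ face there. The obstacle is exactly that only the induced metric degenerates on $\{|g|=1\}$ while the data themselves must remain regular: one must show $d\hat{s}^{2}=(1+|g|^{2})^{2}|\omega|^{2}$ stays a genuine Riemannian metric across the singular locus and that $df\neq 0$ there. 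The hypothesis that $f$ is a face --- in particular the $C^{1}$-extendability of $\lambda\,ds^{2}$ in the definition --- is precisely the analytic input that makes this extension possible, and I expect the heart of the proof to be the translation of that extendability into holomorphic extendability of the null lift $F$.
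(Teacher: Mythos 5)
This theorem is not proved in the paper at all: it is imported verbatim from Aiyama--Akutagawa \cite{AA} and Fujimori \cite[Theorem 1.9]{Fu}, so there is no in-paper argument to compare yours against. Judged on its own terms, your outline of the direct part is sound: with $\alpha=F^{-1}dF=A\,dz$ one indeed gets $f_{z}=F(Ae_{3})F^{\ast}$ and $f_{\bar z}=F(e_{3}A^{\ast})F^{\ast}$, the nullity $\det A=0$ gives $\det(Ae_{3})=0$ and hence conformality, and the determinant identity $\det(\alpha e_{3}+e_{3}\alpha^{\ast})=-(1-|g|^{2})^{2}|\omega|^{2}$ produces exactly the Lorentzian conformal factor and the singular locus $\{|g|=1\}$, which is nowhere dense since $|g|^{2}-1$ is real-analytic and not identically zero. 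The computation of $h$ and $Q$ is routine moving-frame work as you say (though note that holomorphy of $Q$ by itself only gives constancy of $H$; the specific value $H\equiv 1$ comes from the structure of the null ODE, as your parenthetical correctly indicates).

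The genuine gap is in the converse, and you have located it yourself without closing it. Restricting to the spacelike set $W$ and invoking Aiyama--Akutagawa produces $(g,\omega,F)$ only on $\widetilde{W}$, and the whole content of Fujimori's extension of the representation formula to CMC-$1$ faces is the proof that these data continue holomorphically across the singular set $\{|g|=1\}$, which is a real-analytic subset of real codimension one --- so neither Riemann's removable singularity theorem nor a routine codimension-two argument applies. Saying that the $C^{1}$-extendability of $\lambda\,ds^{2}$ in the definition of a face ``is precisely the analytic input that makes this extension possible'' and that you ``expect the heart of the proof'' to lie there is a statement of the problem, not a solution: one must actually show, for instance, that $F$ is continuous up to the singular curve and conclude holomorphic extendability by a Morera/Painlev\'e-type argument, or reconstruct $F$ directly from $f$ by expressions that visibly remain real-analytic across $\{|g|=1\}$, and then verify that $d\hat{s}^{2}=(1+|g|^{2})^{2}|\omega|^{2}$ stays positive definite and $df\neq 0$ there. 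As written, the converse direction of your proposal defers entirely to the cited sources for its only nontrivial step.
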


The holomorphic $2$-differential $Q$ as in (\ref{Value2}) is called the {\it Hopf differential} of $f$. 
In analogy with the theory of CMC-$1$ surfaces in ${\H}^{3}$, the meromorphic function 
\begin{equation}\label{hypG}
G=\dfrac{dF_{11}}{dF_{21}}=\dfrac{dF_{12}}{dF_{22}}
\end{equation}
are called the {\it hyperbolic Gauss map} of $f$. If $f\colon M\to {\Si}^{3}_{1}$ 
be a CMC-$1$ face of a (not necessarily simply connected) Riemann surface $M$, 
then the holomorphic null lift $F$ is defined 
only on the universal cover $\widetilde{M}$ of $M$, but $Q$ and $G$ are well-defined on $M$. 
A geometric meaning of $G$ is given in \cite[Section 4]{FRUYY}. 

Fujimori \cite[Definition 2.1]{Fu} defined the notion of completeness for CMC-$1$ faces as follows: 
We say a CMC-$1$ face $f\colon M\to {\Si}^{3}_{1}$ is {\it complete} if there exists a symmetric $2$-tensor field $T$ 
which vanishes outside a compact subset $C\subset M$ such that the sum $T+ds^{2}$ is a complete Riemannian metric on $M$. 

The completeness characterizes the conformal structure of $M$ \cite[Proposition 1.11]{FRUYY}. 
\begin{theorem}[Fujimori-Rossman-Umehara-Yamada-Yang]\label{conformal}
Let $f\colon M\to{\Si}^{3}_{1}$ be a complete CMC-$1$ face. Then there exist a compact Riemann surface $\overline{M}_{\gamma}$ of genus $\gamma$ 
and a finite number of points $p_{1},\ldots,p_{k}\in\overline{M}_{\gamma}$ such that $M$ is biholomorphic to 
$\overline{M}_{\gamma}\setminus \{p_{1},\ldots,p_{k}\}$. 
\end{theorem}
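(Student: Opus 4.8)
The plan is to manufacture on $M$ a complete conformal metric of finite total curvature and then invoke Huber's theorem (the analytic input behind Theorem \ref{Huber}(i), \cite{Hu}), to the effect that a complete open surface of finite total curvature is conformally a closed Riemann surface with finitely many punctures. The first step is to unwind the completeness hypothesis. Since $T$ vanishes off a compact set $C$ and $T+ds^2$ is a complete Riemannian metric, the induced metric $ds^2=(1-|g|^2)^2|\omega|^2$ must already be nondegenerate and complete on $M\setminus C$; in particular the singular set $\{|g|=1\}$ is contained in $C$, and every divergent path eventually leaving $C$ has infinite $ds^2$-length. Thus the ends of $M$ carry a genuine complete Riemannian metric, and the problem is reduced to controlling their number and their conformal type.

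Next I would replace $ds^2$ by its nondegenerate companion $d\hat{s}^2=(1+|g|^2)^2|\omega|^2$. Writing $t=|g|^2$ and using $(1+t)^2-(1-t)^2=4t\ge 0$, we get $d\hat{s}^2=ds^2+4|g|^2|\omega|^2\ge ds^2$ pointwise, so $d\hat{s}^2$ is complete wherever $ds^2$ is, namely on the ends; and unlike $ds^2$ it degenerates only at the isolated zeros of $\omega$ (at a singular point $|g|=1$ one has $d\hat{s}^2=4|\omega|^2>0$). This $d\hat{s}^2$ is precisely the induced metric of the CMC-$1$ cousin $FF^{\ast}\colon\widetilde{M}\to\H^3$ produced from the very same holomorphic null lift $F$ via Theorem \ref{W-rep}, since $F$ obeys the same equation (\ref{ODE1})$=$(\ref{ODE2}). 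Because $(g,\omega)$, and hence $d\hat{s}^2$, are only locally single-valued on $M$ (they transform under the $SU(1,1)$ monodromy that fixes $ds^2$ but not $d\hat{s}^2$), I would carry out this step on the ends, where tracking the conformal structure suffices.

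The crux, and the step I expect to be the main obstacle, is to upgrade completeness to finite total curvature of $d\hat{s}^2$, equivalently to show that the spherical area $\int 4|dG|^2/(1+|G|^2)^2$ associated with the hyperbolic Gauss map is finite. Completeness alone cannot force this (for minimal surfaces in $\R^3$ there are complete examples of infinite total curvature and infinite conformal type), so one must exploit the rigidity of the CMC-$1$ equations near a complete end. Here I would use that the hyperbolic Gauss map $G$ and the Hopf differential $Q$ are single-valued meromorphic on $M$, together with the duality (\ref{duality})--(\ref{Schwarz}) and the end theory of complete CMC-$1$ surfaces in $\H^3$ (\cite{UY2}, \cite{Yu}): each complete end should be shown to be conformally a punctured disk on which the dual data $(G,\omega^{\sharp})$ extends meromorphically, forcing both finitely many ends and finite total curvature. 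This is exactly the point where the face-specific analysis of \cite{FRUYY} enters. Granting it, $d\hat{s}^2$ is a complete metric of finite total curvature, and Huber's theorem \cite{Hu} yields the biholomorphism $M\cong\overline{M}_{\gamma}\setminus\{p_1,\dots,p_k\}$.
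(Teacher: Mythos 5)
The paper offers no proof of this statement; it is quoted verbatim as \cite[Proposition 1.11]{FRUYY}. Your outer strategy (produce a complete metric with integrable negative curvature and invoke Huber) is the right one, but your execution has a genuine gap at exactly the step you flag as the crux, and that step is both unnecessary and, in the generality required, false. You propose to show that the lift metric $d\hat{s}^{2}=(1+|g|^{2})^{2}|\omega|^{2}$ has finite total curvature, which you identify with finiteness of $\int 4|dG|^{2}/(1+|G|^{2})^{2}$ and with meromorphic extension of $(G,\omega^{\sharp})$ across each end. First, this conflates two different quantities: the total curvature of $d\hat{s}^{2}$ is the spherical area of the \emph{secondary} Gauss map $g$, not of $G$. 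Second, neither quantity is finite for a general complete CMC-$1$ face: Theorem \ref{FRUYY2} explicitly allows $G$ to have an essential singularity at a puncture ($d=\infty$), and Corollary \ref{face2} treats that case via the Big Picard theorem; the parabolic catenoid, with $g=(\log z+1)/(\log z-1)$, already has a multivalued $g$ of infinite spherical area. Relatedly, $d\hat{s}^{2}$ is not single-valued on $M$ (the $SU(1,1)$ monodromy preserves $ds^{2}$ but not $d\hat{s}^{2}$), and it is not single-valued on an annular end either, so "carrying out this step on the ends" does not rescue it. Deferring the crux to "the face-specific analysis of \cite{FRUYY}" is deferring to the very proposition being proved.

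The missing idea is a sign observation that makes the whole detour through $d\hat{s}^{2}$ unnecessary: the Gaussian curvature of the \emph{induced} metric $ds^{2}=(1-|g|^{2})^{2}|\omega|^{2}$ of a spacelike CMC-$1$ face is non-negative. Writing $\omega=\hat{\omega}\,dz$, a direct computation gives $K=4|g'|^{2}(1-|g|^{2})^{-4}|\hat{\omega}|^{-2}\geq 0$ on the set where $ds^{2}$ is nondegenerate (equivalently, by the Gauss equation in the Lorentzian space form ${\Si}^{3}_{1}$ of curvature $+1$, $K=1-\kappa_{1}\kappa_{2}\geq 1-((\kappa_{1}+\kappa_{2})/2)^{2}=0$ since $H=1$). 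Hence the complete metric $T+ds^{2}$ agrees with $ds^{2}$ outside the compact set $C$ and has non-negative curvature there, so its negative curvature part is integrable, and Huber's theorem applies directly to $T+ds^{2}$ to give $M\cong\overline{M}_{\gamma}\setminus\{p_{1},\ldots,p_{k}\}$. No finiteness statement about either Gauss map is needed, which is essential, since none holds in general.
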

Any puncture $p_{j}$ is called an {\it end} of $f$. An end $p_{j}$ of $f$ is said to be {\it regular} if $G$ has at most 
a pole at $p_{j}$. Fujimori, Rossman, Umehara, Yamada and Yang 
investigated the behavior of ends and obtained the Osserman-type inequality \cite[Theorem \RII]{FRUYY}\,. 

\begin{theorem}[Fujimori-Rossman-Umehara-Yamada-Yang]\label{FRUYY2}
Suppose a CMC-$1$ face $f\colon M\to{\Si}^{3}_{1}$ is complete. By Theorem \ref{conformal}, there exist a compact Riemann surface $\overline{M}_{\gamma}$ 
of genus $\gamma$ and a finite number of points $p_{1},\ldots,p_{k}\in\overline{M}_{\gamma}$ such that $M$ is biholomorphic to 
$\overline{M}_{\gamma}\setminus \{p_{1},\ldots,p_{k}\}$. Then we have
\begin{equation}\label{FRUYY1}
2d\geq 2{\gamma}-2+2k\, ,
\end{equation}
where $d$ is the degree of $G$ considered as a map $\overline{M}_{\gamma}$ $($if $G$ has essential singularities, then we define 
$d=\infty$ $)$. Furthermore, equality holds if and only if each end is regular and properly embedded.
\end{theorem}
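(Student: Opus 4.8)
The plan is to run the same Riemann--Roch bookkeeping that proved Theorem \ref{Main1}, now driven by the de Sitter completeness condition of Fujimori \cite{Fu} rather than by the dual metric of $\H^{3}$. If $G$ has an essential singularity at some end then $d=\infty$ and (\ref{FRUYY1}) holds vacuously, so I would assume throughout that $G$ extends to a meromorphic function of finite degree $d$ on the compactification $\overline{M}_{\gamma}$ supplied by Theorem \ref{conformal}. Since the duality relations (\ref{duality}) are statements about the holomorphic null lift $F$ and do not refer to the target space, the same formulas produce a meromorphic $1$-form $\omega^{\sharp}:=-Q/dG$ on $\overline{M}_{\gamma}$, and I would compute its divisor.

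First I would dispose of the interior. Using the nondegeneracy of the Weierstrass data away from singularities---namely that $\mathrm{ord}_{q}Q$ equals the branching order of $G$, as in condition (a) of Lemma \ref{Const}---the zeros of $dG$ at interior branch points of $G$ cancel against the zeros of $Q$, so $\omega^{\sharp}$ has order $0$ wherever $G$ is finite and a zero of order exactly $2m$ at each pole of $G$ of order $m$. Summing over the poles of $G$ gives total interior zero order $2d$, exactly as in the computation leading to (\ref{Ossermanii}).

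The heart of the matter, and the step I expect to be the main obstacle, is the local analysis at each end $p_{j}$. Writing $\mu_{j}^{\sharp}$ and $d_{j}=\mathrm{ord}_{p_{j}}Q$ as in the proof of Theorem \ref{Main1}, the identity $\omega^{\sharp}=-Q/dG$ gives a pole of order $\mu_{j}^{\sharp}-d_{j}$ at $p_{j}$, and the whole estimate rests on the claim that de Sitter completeness forces
\[
\mu_{j}^{\sharp}-d_{j}\geq 2
\]
at every end. This is the face analogue of the Umehara--Yamada bound \cite{UY2} used in (\ref{Ossermani}), but here it requires a genuine asymptotic study of a complete CMC-$1$ face near a puncture. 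I would work in a punctured coordinate disk around $p_{j}$, write out the singular set $\{|g|=1\}$, the lift metric $d\hat{s}^{2}=(1+|g|^{2})^{2}|\omega|^{2}$ and the induced metric $ds^{2}=(1-|g|^{2})^{2}|\omega|^{2}$, and then invoke Fujimori's completeness condition (the existence of a symmetric $2$-tensor $T$, vanishing off a compact set, with $T+ds^{2}$ complete) to exclude the low-order possibilities $\mu_{j}^{\sharp}-d_{j}\leq 1$. The delicate point, absent in the $\H^{3}$ setting, is that $ds^{2}$ itself degenerates along the dense singular locus, so controlling the growth of $F$---hence of $G$ and $Q$---along the puncture is subtler than in Theorem \ref{Main1}; this is exactly the analysis carried out in \cite{FRUYY}.

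Granting the end estimate, the conclusion is immediate: the degree of a meromorphic $1$-form on $\overline{M}_{\gamma}$ equals $2\gamma-2$, so balancing the $2d$ interior zeros against the end poles yields
\[
2d-\sum_{j=1}^{k}(\mu_{j}^{\sharp}-d_{j})=2\gamma-2,
\]
whence $2d=2\gamma-2+\sum_{j}(\mu_{j}^{\sharp}-d_{j})\geq 2\gamma-2+2k$, which is (\ref{FRUYY1}). Finally, equality forces $\mu_{j}^{\sharp}-d_{j}=2$ for every $j$, and I would match this numerical condition with the known local normal forms for ends to identify it with each end being regular (so that $G$ has at most a pole) and properly embedded, just as in the genus-one discussion following (\ref{D=3}).
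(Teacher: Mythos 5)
First, a point of comparison: the paper itself offers no proof of Theorem \ref{FRUYY2}; it is imported verbatim as \cite[Theorem~II]{FRUYY}, so there is no internal argument to measure yours against. Judged on its own terms, your outline reproduces the correct global bookkeeping --- it is the same Riemann--Roch computation as in the proof of Theorem \ref{Main1}, with $\omega^{\sharp}=-Q/dG$ having total zero order $2d$ on $M$ and a pole of order $\mu_{j}^{\sharp}-d_{j}$ at each end $p_{j}$, so that $2d=2\gamma-2+\sum_{j}(\mu_{j}^{\sharp}-d_{j})$ and the whole statement reduces to the end estimate $\mu_{j}^{\sharp}-d_{j}\geq 2$.

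That reduction, however, is where essentially all the content of the theorem lives, and your proposal does not supply it. Three things remain unestablished. (i) That Fujimori's completeness (a symmetric tensor $T$ supported in a compact set with $T+ds^{2}$ complete) forces the Hopf differential $Q$ to extend meromorphically to $\overline{M}_{\gamma}$; without this the divisor computation on the compact surface does not even begin, and it is not automatic, since $Q$ is only known a priori to be holomorphic on $M$. (ii) The estimate $\mu_{j}^{\sharp}-d_{j}\geq 2$ itself: in the $\H^{3}$ setting this is \cite[Lemma~3]{UY2}, and in the de Sitter setting one must first show that the singular set $\{|g|=1\}$ does not accumulate at the punctures, so that each end is a genuine complete spacelike end to which the asymptotic analysis applies --- you correctly flag this as ``the delicate point'' but then wave at it. (iii) The equality case: identifying ``$\mu_{j}^{\sharp}-d_{j}=2$ for all $j$'' with ``each end is regular and properly embedded'' requires the classification of end behavior (the de Sitter analogue of \cite{CHR1}), not just the numerology. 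You defer all three to ``the analysis carried out in \cite{FRUYY}'' --- but \cite{FRUYY} is precisely the source of the theorem being proved, so this is circular rather than a proof. The skeleton is right and does match how the result is actually obtained, but as written your argument only proves the easy implication from an unproved end estimate to the inequality.
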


As an application of Theorem \ref{FRUYY2}, we give the upper bound on ${\nu}_{G}$ and $D_{G}$ for complete CMC-$1$ faces. 
When $d$ is finite, that is, each end of $f$ is regular, Theorem \ref{FRUYY2} implies that 
\begin{equation}
\dfrac{1}{R}=\dfrac{{\gamma}-1+k/2}{d}< 1\, .
\end{equation}
Since $G$ can be meromorphically on $\overline{M}_{\gamma}$ for this case, we can apply the latter half of proof of Theorem \ref{Main1}\,. 
Therefore, we have proved the result below.
\begin{proposition}\label{face1}
Let $f\colon M\to{\Si}^{3}_{1}$ be a non-flat complete CMC-$1$ face. 
By Theorem \ref{conformal}, we can assume that $M$ is biholomorphic to $\overline{M}_{\gamma}\setminus \{p_{1},\ldots,p_{k}\}$. 
Let $G$ be the hyperbolic Gauss map of $f$ and $d$ be the degree of $G$ considered as a map $\overline{M}_{\gamma}$. 
Moreover, we assume that each end is regular. Then we have 
\begin{equation}\label{face11}
D_{G}\leq \nu_{G}\leq 2+\frac{2}{R}, \quad \frac{1}{R}=\frac{\gamma -1+k/2}{d}< 1\,.
\end{equation}
In particular, we have $D_{G}\leq 3$. 
\end{proposition}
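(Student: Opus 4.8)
The plan is to reduce the statement to the global ramification count already executed in the second half of the proof of Theorem \ref{Main1}, the only genuinely new ingredient being the Osserman-type inequality of Theorem \ref{FRUYY2}. First I would check that the hypotheses place us in exactly the situation that count requires. Completeness gives, via Theorem \ref{conformal}, the conformal model $M=\overline{M}_{\gamma}\setminus\{p_{1},\ldots,p_{k}\}$, and the assumption that each end is regular means $G$ has at worst a pole at every $p_{j}$; since $G$ is already meromorphic on $M$, it therefore extends to a meromorphic function on the compact surface $\overline{M}_{\gamma}$, of some finite degree $d$. The non-flatness hypothesis guarantees $d\geq 1$: arguing as in the proof of Theorem \ref{Zuhan}, the face enjoys the same holomorphic null lift $F$ satisfying the ODE (\ref{ODE2}) and the same definition (\ref{hypG}) of $G$, so if $G$ were constant the secondary Gauss map $g$ would be forced to be constant, i.e.\ $f$ flat.

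Next I would extract the bound on $R$ from Theorem \ref{FRUYY2}. That inequality reads $2d\geq 2\gamma-2+2k$, i.e.\ $d\geq\gamma-1+k$; since a complete (noncompact) face has at least one end, $k\geq 1$, and hence $d\geq\gamma-1+k>\gamma-1+k/2$, which is precisely $1/R<1$. This strictness is the feature distinguishing the de Sitter case from the general pseudo-algebraic case of Theorem \ref{Main1} (where only $1/R\leq 1$ is available), and it is what will sharpen the conclusion to $D_{G}\leq 3$.

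With $G$ meromorphic on $\overline{M}_{\gamma}$, I would then transcribe the counting argument verbatim. Writing $r_{0}=D_{G}$ for the number of exceptional values and $b_{1},\ldots,b_{l_{0}}$ for the remaining totally ramified values: because each exceptional value is omitted on $M$, all of its $\overline{M}_{\gamma}$-preimages must lie among the ends, which gives $k\geq dr_{0}-n_{0}$ with $n_{0}$ the branching order accumulated there; the multiplicity estimate at each $b_{i}$ gives $l_{0}-\sum_{i}1/\nu_{i}\leq n_{r}/d$; and Riemann--Hurwitz gives the total branching order $n_{G}=2(d+\gamma-1)$. Since the preimages of the exceptional and of the non-exceptional totally ramified values are disjoint, $n_{0}+n_{r}\leq n_{G}$, and combining everything,
\[
\nu_{G}\leq r_{0}+\sum_{i=1}^{l_{0}}\Bigl(1-\frac{1}{\nu_{i}}\Bigr)\leq\frac{n_{0}+k}{d}+\frac{n_{r}}{d}\leq\frac{n_{G}+k}{d}=2+\frac{2}{R},
\]
while $D_{G}\leq\nu_{G}$ is immediate, each exceptional value contributing $1$ to $\nu_{G}$ and the remaining terms being nonnegative.

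Finally, combining $\nu_{G}\leq 2+2/R$ with the strict bound $1/R<1$ yields $\nu_{G}<4$, and since $D_{G}$ is a nonnegative integer bounded by $\nu_{G}$, the sharp conclusion $D_{G}\leq 3$ follows. I expect the only real subtlety to be the justification that $G$ extends meromorphically with finite degree $d$; everything downstream is formal once $d<\infty$. Thus the regularity-of-ends hypothesis (equivalently, the finiteness of $d$ supplied by Theorem \ref{FRUYY2}) is the crux, and the remainder is a direct carry-over of the $\H^{3}$ argument into the de Sitter setting, the two settings sharing the same representation formula, the same definition (\ref{hypG}) of $G$, and the same null-lift ODE.
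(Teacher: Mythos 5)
Your proposal is correct and follows essentially the same route as the paper: the author likewise invokes Theorem \ref{FRUYY2} to obtain $1/R<1$ for regular ends and then applies the latter half of the proof of Theorem \ref{Main1} verbatim to the meromorphic extension of $G$ to $\overline{M}_{\gamma}$. Your additional remarks (the non-flatness argument via the null lift, and the explicit check that $k\geq 1$ makes the inequality strict) are consistent elaborations of steps the paper leaves implicit.
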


When $(\gamma, k, d)=(0, 2, 1)$, we have 
\[
D_{G}={\nu}_{G}=2\, .
\]
In this case, there exist examples which show that (\ref{face11}) is sharp. In fact, 
the elliptic catenoid (\cite[Example 5.4]{Fu}) given by
\[
g=z^{\mu},\qquad  Q=\dfrac{1-{\mu}^{2}}{4z^{2}}dz^{2},\qquad  G=z \, ,
\]
where ${\mu}\in \R\setminus \{0\}$ and the parabolic catenoid (\cite[Example 5.6]{FRUYY}) given by
\[
g=\dfrac{\log{z}+1}{\log{z}-1}, \qquad Q=\dfrac{dz^{2}}{4z^{2}}, \qquad G=z \, ,
\]
are complete CMC-$1$ faces defined on ${\C}\setminus \{0\}$ with two regular ends and $D_{G}={\nu}_{G}=2$.

We now obtain the following corollary of Proposition \ref{face1}. 
\begin{corollary}\label{face2}
The hyperbolic Gauss map of a non-flat complete CMC-$1$ face can omit at most three values. 
\end{corollary}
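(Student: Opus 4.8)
The plan is to split into two cases according to whether the degree $d$ of the hyperbolic Gauss map $G$ is finite or infinite, equivalently whether all ends are regular or not. By Theorem \ref{conformal}, a non-flat complete CMC-$1$ face is defined on $M=\overline{M}_{\gamma}\setminus \{p_{1},\ldots,p_{k}\}$ with $G$ a (non-constant, by non-flatness) meromorphic function on $M$, so this dichotomy is exhaustive.

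First I would dispose of the case where every end is regular. In that case $G$ extends meromorphically over each puncture, hence to the compact surface $\overline{M}_{\gamma}$, so $d$ is finite. This is precisely the hypothesis of Proposition \ref{face1}, which gives $D_{G}\leq \nu_{G}\leq 2+2/R$ with $1/R<1$, and therefore $D_{G}\leq 3$.

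Next I would treat the case where some end $p_{j}$ is not regular. By the definition of regularity adopted above (an end is regular exactly when $G$ has at most a pole there), this means $G$ has an essential singularity at $p_{j}$. Here the main tool is the Big Picard theorem for meromorphic functions: a function with an essential singularity takes every value of $\widehat{\C}$ infinitely often on any punctured neighborhood of the singularity, with at most two exceptions. Applying this on a punctured neighborhood of $p_{j}$ shows that $G$ omits at most two values there, and \emph{a fortiori} at most two values on all of $M$. Hence $D_{G}\leq 2\leq 3$ in this case as well.

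Combining the two cases gives $D_{G}\leq 3$ unconditionally, which is the assertion. The only point that requires care is the essential-singularity case, and I expect the Big Picard theorem to settle it cleanly once one observes that a non-regular end is synonymous with an essential singularity of $G$; the regular case is then immediate from Proposition \ref{face1}.
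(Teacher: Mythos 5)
Your proposal is correct and follows essentially the same two-case argument as the paper: regular ends are handled by Proposition \ref{face1}, and an irregular end gives an essential singularity of $G$ at the puncture, where the Big Picard theorem forces $D_{G}\leq 2$. No gaps.
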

\begin{proof}
If each end is regular, then the hyperbolic Gauss map $G$ can omit at most three values from Proposition \ref{face1}.
If not, then  $G$ has an essential singularity at a puncture. 
By the Big Picard theorem, $G$ attains all but at most two points of $\widehat{\C}$ infinitely often. In particular, 
$G$ can omit at most two values. 
\end{proof}

\end{document}